\setlist{topsep=3pt,partopsep=0pt,itemsep=1pt,parsep=0pt}
\newtheorem{Theorem}{Theorem}[section]
\newtheorem{Lemma}[Theorem]{Lemma}
\newtheorem{Claim}{Claim}
\def \leq {\leqslant}
\def \geq {\geqslant}
\let\oldproofname=\proofname
\renewcommand{\proofname}{\rm\bf{\oldproofname}}
\numberwithin{equation}{section}
\begin{document}

\title{A note on non-empty cross-intersecting families}

\author[a]{Menglong Zhang}
\author[a]{Tao Feng}
\affil[a]{School of Mathematics and Statistics, Beijing Jiaotong University, Beijing, 100044, P.R. China}
\affil[ ]{mlzhang@bjtu.edu.cn; tfeng@bjtu.edu.cn}

\date{}
\maketitle

\footnotetext{Supported by NSFC under Grant 12271023}


\begin{abstract}
The families $\mathcal F_1\subseteq \binom{[n]}{k_1},\mathcal F_2\subseteq \binom{[n]}{k_2},\dots,\mathcal F_r\subseteq \binom{[n]}{k_r}$ are said to be cross-intersecting if $|F_i\cap F_j|\geq 1$ for any $1\leq i<j\leq r$ and $F_i\in \mathcal F_i$, $F_j\in\mathcal F_j$. Cross-intersecting families $\mathcal F_1,\mathcal F_2,\dots,\mathcal F_r$ are said to be {\em non-empty} if $\mathcal F_i\neq\emptyset$ for any $1\leq i\leq r$. This paper shows that if $\mathcal F_1\subseteq\binom{[n]}{k_1},\mathcal F_2\subseteq\binom{[n]}{k_2},\dots,\mathcal F_r\subseteq\binom{[n]}{k_r}$ are non-empty cross-intersecting families with $k_1\geq k_2\geq\cdots\geq k_r$ and $n\geq k_1+k_2$, then $\sum_{i=1}^{r}|\mathcal F_i|\leq\max\{\binom{n}{k_1}-\binom{n-k_r}{k_1}+\sum_{i=2}^{r}\binom{n-k_r}{k_i-k_r},\
\sum_{i=1}^{r}\binom{n-1}{k_i-1}\}$. This solves a problem posed by Shi, Frankl and Qian recently. The extremal families attaining the upper bounds are also characterized.
\end{abstract}

\noindent {\bf Keywords}: intersecting family; non-empty cross-intersecting family


\section{Introduction}

Let $n$ and $k$ be integers with $1\leq k\leq n$. Write $[n]=\{1,2,\ldots,n\}$.
Denote by $2^{[n]}$ and $\binom{[n]}{k}$ the power set and the family of all $k$-subsets of $[n]$, respectively. For $1\leq k\leq n$, a family ${\cal F}\subseteq 2^{[n]}$ is said to be {\em $k$-uniform} if every member of $\cal F$ contains exactly $k$ elements, i.e., $\mathcal F\subseteq \binom{[n]}{k}$. Write $\overline{\mathcal F}=\{[n]\setminus F:F\in\mathcal F\}$ for $\mathcal F\subseteq 2^{[n]}$. Two families $\mathcal F,\mathcal F'\subseteq 2^{[n]}$ are said to be {\em isomorphic} if there exists a permutation $\pi$ on $[n]$ such that $\{\{\pi(x):x\in F\}:F\in{\cal F}\}={\cal F}'$.

A family $\mathcal F\subseteq \binom{[n]}{k}$ is said to be {\em intersecting} if $|F_1\cap F_2|\geq 1$ for any $F_1, F_2\in \mathcal F$. The celebrated Erd\H{o}s-Ko-Rado theorem \cite{EKR} determines the size and the structure of the largest intersecting uniform families.

\begin{Theorem}\label{thm:EKR}{\rm \cite{EKR}}
If $n\geq2k$ and $\mathcal F\subseteq \binom{[n]}{k}$ is an intersecting family, then $|\mathcal F|\leq \binom{n-1}{k-1}.$ Moreover, for $n>2k$, the equality holds if and only if $\mathcal F$ is isomorphic to $\{F\in\binom{[n]}{k}:1\in F\}$.
\end{Theorem}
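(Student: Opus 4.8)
The plan is to prove the inequality by Katona's cyclic permutation (circle) method, which is short and self-contained, and then to treat the equality characterization for $n>2k$ separately, since the ``if'' direction is immediate (a star $\{F\in\binom{[n]}{k}:x\in F\}$ is clearly intersecting and has size exactly $\binom{n-1}{k-1}$) and only the ``only if'' direction carries content.

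First I would fix a cyclic arrangement of $[n]$ on a circle of $n$ positions and call $F\in\binom{[n]}{k}$ an \emph{arc} of that arrangement if its elements occupy $k$ consecutive positions. The core of the argument is a local lemma: whenever $n\geq 2k$, any pairwise-intersecting collection of arcs of a fixed arrangement has size at most $k$. To see this, fix one arc $A$ of the collection; exactly $2k-1$ of the $n$ arcs meet $A$, and since $n\geq 2k$ these are all distinct. The $2k-2$ arcs other than $A$ split into $k-1$ pairs, each consisting of two mutually disjoint arcs (pairing an arc overlapping $A$ on the left with the complementary-side arc on the right). An intersecting collection can contain at most one arc from each disjoint pair, so it holds at most $(k-1)+1=k$ arcs.

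Next I would double count the incidences $(\sigma,F)$ in which $\sigma$ is an arrangement and $F\in\mathcal F$ is an arc of $\sigma$. Each fixed $F$ is an arc of exactly $n\cdot k!\cdot(n-k)!$ arrangements (choose one of $n$ windows, order $F$ inside it, order the complement outside), so the total count is $|\mathcal F|\cdot n\cdot k!\cdot(n-k)!$. On the other hand, by the lemma each of the $n!$ arrangements contributes at most $k$ incidences. Comparing the two counts gives
\[
|\mathcal F|\cdot n\cdot k!\cdot(n-k)!\;\leq\;k\cdot n!,
\]
which simplifies directly to $|\mathcal F|\leq\binom{n-1}{k-1}$.

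The uniqueness for $n>2k$ is where the real work lies, and I expect it to be the main obstacle. Equality forces every arrangement to carry exactly $k$ members of $\mathcal F$ as arcs, so the $k$ arcs it carries must be pairwise intersecting and maximal in number; I would analyse the equality case of the lemma to show that, once $n>2k$, the only way $k$ arcs on a circle can be pairwise intersecting is for all of them to contain one common position, and then patch these local ``stars'' across all arrangements into a single global star $\{F:x\in F\}$. Alternatively I would switch to the shifting method: repeated $(i,j)$-shifts preserve both $|\mathcal F|$ and the intersecting property, reduce $\mathcal F$ to a compressed family, and a short induction on $n$ then yields both the bound and the rigidity for compressed families, from which the structure of the original $\mathcal F$ is recovered. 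Either way the delicate point is that the strict inequality $n>2k$ must be used essentially—for $n=2k$ each disjoint pair of arcs is a complementary pair and uniqueness genuinely fails—so the argument must exploit that the disjoint-pair structure becomes rigid precisely when the two arcs of a pair cannot both be extremal.
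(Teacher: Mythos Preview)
The paper does not prove this theorem at all: Theorem~\ref{thm:EKR} is stated with a citation to \cite{EKR} as a classical background result, and no argument is supplied. So there is nothing in the paper to compare your proposal against.

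For what it is worth, your proposal is a correct and standard proof of the inequality via Katona's circle method; the double count $|\mathcal F|\cdot n\cdot k!\,(n-k)! \le k\cdot n!$ is exactly right and yields $|\mathcal F|\le\binom{n-1}{k-1}$. Your treatment of the uniqueness case for $n>2k$ is only a sketch (you outline two routes, local arc analysis or shifting, without carrying either to completion), but you flag this honestly as the part requiring real work, and both routes you suggest are viable and well known. Since the paper itself offers no proof, there is no discrepancy to report.
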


Cross-intersecting families are a variation of intersecting families. Let $r\geq 2$ and $n,k_1,k_2,\dots$, $k_r$ be positive integers. The families $\mathcal F_1\subseteq \binom{[n]}{k_1},\mathcal F_2\subseteq \binom{[n]}{k_2},\dots,\mathcal F_r\subseteq \binom{[n]}{k_r}$ are said to be {\em cross-intersecting} if $|F_i\cap F_j|\geq 1$ for any $1\leq i<j\leq r$ and $F_i\in \mathcal F_i$, $F_j\in\mathcal F_j$. If $\mathcal F$ is intersecting, then the families $\mathcal F_i=\mathcal F$ $(i=1,2\dots,r)$ are cross-intersecting. Another trivial example of cross-intersecting families is $\mathcal F_1\subseteq \binom{[n]}{k_1}$ and $\mathcal F_i=\emptyset$ for every $2\leq i\leq r$. Cross-intersecting families $\mathcal F_1\subseteq\binom{[n]}{k_1},\dots,\mathcal F_r\subseteq\binom{[n]}{k_r}$ are called {\em maximal} if $\mathcal F_1,\dots,(\mathcal F_i\cup\{A\}),\dots,\mathcal F_r$ are not cross-intersecting for any $1\leq i\leq r$ and any $A\in\binom{[n]}{k_i}\setminus\mathcal F_i$.

There are two natural ways to measure the largeness of cross-intersecting families: either by the sum $\sum_{i=1}^{r}|\mathcal F_i|$ or by the product $\prod_{i=1}^{r}|\mathcal F_i|$ (cf. \cite{Brog15,Pyber}) of their sizes. This paper only focuses on the measure of the sum $\sum_{i=1}^{r}|\mathcal F_i|$. Hilton \cite{Hilton77} settled the problem of what cross-intersecting families with the maximum sum are.

\begin{Theorem}\label{thm:cross_inter}{\rm \cite{Hilton77}}
Let $n$ and $k$ be positive integers with $n\geq 2k$. If $\mathcal F_1,\mathcal F_2,\dots,\mathcal F_r\subseteq\binom{[n]}{k}$ are cross-intersecting families, then
$$\sum_{i=1}^{r}|\mathcal F_i|\leq\left\{
\begin{array}{cc}
\binom{n}{k}, & \text{if }r\leq\frac{n}{k};\\
r\binom{n-1}{k-1}, & \text{if }r\geq\frac{n}{k}.
\end{array} \right. $$
If the equality holds, then
\begin{enumerate}
\item[$(1)$] when $r<\frac{n}{k}$, $\mathcal F_1=\binom{[n]}{k}$ and $\mathcal F_2=\dots=\mathcal F_r=\emptyset$;
\item[$(2)$] when $r>\frac{n}{k}$, $\mathcal F_i=\mathcal F$ for every $i\in [r]$, where $\mathcal F\subseteq\binom{[n]}{k}$ is an intersecting family with $|\mathcal F|=\binom{n-1}{k-1}$;
\item[$(3)$] when $r=\frac{n}{k}$, if $r=2$, then $\mathcal F_1=\binom{[n]}{k}\setminus\overline{\mathcal F_2}$ and $\mathcal F_2\subseteq\binom{[n]}{k}$ with $0\leq|\mathcal F_2|\leq\binom{n}{k}$; if $r>2$, then $\mathcal F_1,\mathcal F_2,\dots,\mathcal F_r$ are as in $(1)$ or $(2)$.
\end{enumerate}
\end{Theorem}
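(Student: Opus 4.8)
The plan is to prove Theorem~\ref{thm:cross_inter} by Katona's cyclic permutation (averaging) method, which handles all $r$ uniformly and makes the phase transition at $r=n/k$ transparent. Identifying $[n]$ with $\Z_n$ through a cyclic order $\pi$, the $k$-arcs are the $n$ cyclic intervals of length $k$; for each family I record $S_i^\pi\subseteq\Z_n$, the set of starting points of those arcs that lie in $\mathcal F_i$, and set $a_i(\pi)=|S_i^\pi|$. Since a uniformly random cyclic order turns a fixed $k$-set into an arc with probability $p=n/\binom{n}{k}$ (every order has exactly $n$ arcs, and there are $\binom{n}{k}$ candidate sets), linearity of expectation gives $\Ex[\sum_i a_i(\pi)]=p\sum_i|\mathcal F_i|$. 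Hence it suffices to establish the per-order bound
$$\sum_{i=1}^r a_i(\pi)\le\max\{n,\,rk\}\qquad\text{for every }\pi,$$
because dividing by $p$ then yields $\sum_i|\mathcal F_i|\le\frac{\binom{n}{k}}{n}\max\{n,rk\}=\max\{\binom{n}{k},\,r\binom{n-1}{k-1}\}$, using the identity $\frac{k}{n}\binom{n}{k}=\binom{n-1}{k-1}$.

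The core of the argument is the per-order bound, which I would derive from the following arc lemma: if $S,S'\subseteq\Z_n$ are nonempty and every length-$k$ arc with start in $S$ meets every length-$k$ arc with start in $S'$, then $|S|+|S'|\le 2k$. Two length-$k$ arcs are disjoint exactly when their starting points differ by an element of $\{k,k+1,\dots,n-k\}\pmod n$; fixing a point of $S$ as the origin confines $S'$ to the window $(-k,k)$ of $2k-1$ positions, and intersecting the admissible windows $(t-k,t+k)$ over the extreme points $t_{\min},t_{\max}$ of $S'$ confines $S$ to an interval of length $2k-1-(t_{\max}-t_{\min})$, while $|S'|\le(t_{\max}-t_{\min})+1$; adding these gives $|S|+|S'|\le 2k$. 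With this lemma the dichotomy is immediate: if at most one family contributes arcs then $\sum_i a_i(\pi)\le n$; if the $m\ge 2$ contributing families contribute, summing the pairwise bound over all $\binom{m}{2}$ pairs gives $(m-1)\sum_i a_i(\pi)\le \binom{m}{2}\cdot 2k$, hence $\sum_i a_i(\pi)\le mk\le rk$. Either way $\sum_i a_i(\pi)\le\max\{n,rk\}$, as required, and the hypothesis $n\ge 2k$ is exactly what makes complementary arcs disjoint and the window computation valid.

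The extremal characterization is where I expect the real work, and it is the main obstacle. Equality in the averaging step forces $\sum_i a_i(\pi)=\max\{n,rk\}$ for \emph{every} cyclic order, so I would read the equality cases off the per-order analysis and then glue the local information into a global structure. When $r<n/k$ reaching the value $n$ in every order requires exactly one family to be all of $\binom{[n]}{k}$ with the rest empty, giving case~$(1)$. When $r>n/k$ reaching $rk$ forces all $r$ families to contribute with every pairwise bound tight in every order, which should pin each family down, in every order, to the $k$ arcs through a common point; translating this back and invoking the uniqueness in Theorem~\ref{thm:EKR} (valid since $n>2k$ here) yields case~$(2)$, the delicate point being to show the local "common point in each order" property forces a global star. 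The genuinely branching regime is the boundary $r=n/k$, where the two bounds coincide and case~$(3)$ appears: for $r=2$ one obtains the whole one-parameter family $\mathcal F_1=\binom{[n]}{k}\setminus\overline{\mathcal F_2}$, which I would treat by a direct complementation argument rather than through the circle, while for $r>2$ I would verify that no extremal configurations arise beyond those already listed in $(1)$ and $(2)$.
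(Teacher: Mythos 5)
You should first note that the paper does not prove this statement at all: Theorem~\ref{thm:cross_inter} is Hilton's theorem, quoted from \cite{Hilton77}, and the paper merely remarks that Borg \cite{Brog09} and Frankl \cite{Frankl21} gave short proofs, the latter by Katona's circle method. So your route coincides in spirit with the proof the paper cites rather than contains, and your averaging pipeline (probability $n/\binom{n}{k}$ that a fixed $k$-set becomes an arc, the per-order bound $\sum_i a_i(\pi)\le\max\{n,rk\}$, and the pairwise summation $(m-1)\sum_i a_i\le\binom{m}{2}\cdot 2k$) is correctly set up. The problem is your proof of the arc lemma itself. The admissible window for $S$ relative to $t\in S'$ is an \emph{arc} $(t-k,t+k)$ in $\Z_n$, and intersecting the windows at $t_{\min}$ and $t_{\max}$ gives only the ``front'' piece $(t_{\max}-k,\,t_{\min}+k)$ if you pretend you are on the integer line. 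Modulo $n$ the intersection also has a ``back'' component of size $\max\{0,\,d+2k-n-1\}$ where $d=t_{\max}-t_{\min}$, which is nonempty as soon as $d\geq n-2k+2$; since $d$ can be as large as $2k-2$, this happens throughout the range $2k\leq n\leq 4k-4$. Concretely, for $n=7$, $k=3$, $S'=\{5,2\}$ (i.e.\ $\{-2,2\}$ in your normalization, $d=4$), the set $S=\{0,3,4\}$ is admissible and has size $3$, not size at most $2k-1-d=1$; and at $n=2k$ the computation collapses entirely: for $n=6$, $k=3$ the sets $S=S'=\{0,2,4\}$ are cross-intersecting with $|S|+|S'|=2k$, yet neither is an interval. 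The lemma's conclusion $|S|+|S'|\leq 2k$ is true for all $n\geq 2k$ (at $n=2k$ one can argue per antipodal pair $p=\{x,x+k\}$ that $|S\cap p|+|S'\cap p|\leq 2$), but your window argument does not establish it precisely in the range that includes the boundary case where the theorem is most delicate.

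The same blind spot infects your equality analysis for case $(2)$. You write that $n>2k$ holds ``here,'' but $r>\frac{n}{k}$ is perfectly compatible with $n=2k$ (take any $r\geq 3$), and in that subcase the theorem deliberately states only that the $\mathcal F_i$ equal a common intersecting family of size $\binom{n-1}{k-1}$ --- such families need not be stars when $n=2k$, Theorem~\ref{thm:EKR} uniqueness is unavailable, and your per-order claim that each extremal $S_i$ consists of the $k$ arcs through a common point is false there (again $\{0,2,4\}$ modulo $6$). So your roadmap for case $(2)$ would attempt to prove a statement that is actually wrong for $n=2k$; you need to split off $n=2k$ and handle it by a complementation/antipodal-pair argument, reserving the interval-forcing analysis (and the gluing over all cyclic orders) for $n>2k$. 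Finally, the genuinely branching boundary case $(3)$ with $r>2$ is only promised (``I would verify''), not carried out; since at $r=n/k$ both per-order bounds coincide, ruling out hybrid extremal configurations there requires an actual argument, not just the observation that the bounds agree.
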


Brog \cite{Brog09} gave a short proof for Theorem \ref{thm:cross_inter} and Frankl \cite{Frankl21} recently provided another simple proof by using Katona's circle method. Theorem \ref{thm:cross_inter} shows that one of extremal cross-intersecting families is $\{\binom{[n]}{k},\emptyset,\dots,\emptyset\}$. Cross-intersecting families $\mathcal F_1,\mathcal F_2,\dots,\mathcal F_r$ are said to be {\em non-empty} if $\mathcal F_i\neq\emptyset$ for any $1\leq i\leq r$. It is quite natural to ask what the structure of the largest non-empty cross-intersecting families is. Hilton and Milner gave the following result.

\begin{Theorem}\label{thm:nonempty_2_uniform}{\rm \cite{HM}}
Let $n$ and $k$ be positive integers with $n\geq2k$. If $\mathcal F_1\subseteq\binom{[n]}{k}$ and $\mathcal F_2\subseteq\binom{[n]}{k}$ are non-empty cross-intersecting, then $|\mathcal F_1|+|\mathcal F_2|\leq \binom{n}{k}-\binom{n-k}{k}+1.$
\end{Theorem}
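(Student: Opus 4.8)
The plan is to pass to complements and reduce the whole statement to a single Kruskal--Katona-type shadow estimate. Write $m=n-k$, so $m\geq k$, and for a family $\mathcal H\subseteq\binom{[n]}{m}$ let $\Delta_k(\mathcal H)\subseteq\binom{[n]}{k}$ denote the family of all $k$-element subsets of members of $\mathcal H$. Put $\mathcal H=\overline{\mathcal F_2}=\{[n]\setminus F:F\in\mathcal F_2\}$. For $F_1\in\binom{[n]}{k}$ and $F_2\in\mathcal F_2$ one has $F_1\cap F_2=\emptyset$ if and only if $F_1\subseteq[n]\setminus F_2$, i.e.\ if and only if $F_1$ is a $k$-subset of the corresponding member of $\mathcal H$. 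Hence $\mathcal F_1,\mathcal F_2$ are cross-intersecting exactly when $\mathcal F_1\cap\Delta_k(\mathcal H)=\emptyset$, so $|\mathcal F_1|\leq\binom{n}{k}-|\Delta_k(\mathcal H)|$ and therefore
\[
|\mathcal F_1|+|\mathcal F_2|\leq\binom{n}{k}+|\mathcal H|-|\Delta_k(\mathcal H)|.
\]
Since $\mathcal F_2\neq\emptyset$ we have $\mathcal H\neq\emptyset$, and since $\mathcal F_1\neq\emptyset$ we must have $\Delta_k(\mathcal H)\neq\binom{[n]}{k}$; this properness of the shadow, forced by non-emptiness, is exactly what will drive the estimate.

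It thus suffices to prove the following shadow-deficiency bound: \emph{if $\emptyset\neq\mathcal H\subseteq\binom{[n]}{m}$ and $\Delta_k(\mathcal H)\neq\binom{[n]}{k}$, then $|\Delta_k(\mathcal H)|-|\mathcal H|\geq\binom{m}{k}-1$.} Granting it, the displayed inequality yields $|\mathcal F_1|+|\mathcal F_2|\leq\binom{n}{k}-\binom{m}{k}+1=\binom{n}{k}-\binom{n-k}{k}+1$, as claimed. The bound is tight at $|\mathcal H|=1$: a single $m$-set has exactly $\binom{m}{k}$ subsets of size $k$, giving deficiency $\binom{m}{k}-1$, which corresponds to the extremal configuration $\mathcal F_2=\{B\}$ together with $\mathcal F_1=\{F\in\binom{[n]}{k}:F\cap B\neq\emptyset\}$. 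Note that no internal-intersecting hypothesis on $\mathcal F_1$ or $\mathcal F_2$ is used, so the reduction handles all cross-intersecting pairs.

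For the shadow bound I would first reduce to an extremal configuration. Since the iterated $k$-shadow of a colexicographic initial segment of $\binom{[n]}{m}$ is again such a segment, the Kruskal--Katona theorem gives $|\Delta_k(\mathcal H)|\geq|\Delta_k(\mathcal C_b)|$, where $b=|\mathcal H|$ and $\mathcal C_b$ is the first $b$ sets of $\binom{[n]}{m}$ in colex order; moreover, because $\mathcal C_b$ minimises the shadow, $\mathcal H$ has proper $k$-shadow for some family of size $b$ precisely when $\mathcal C_b$ does. So it remains to verify $|\Delta_k(\mathcal C_b)|-b\geq\binom{m}{k}-1$ for every $b$ with $|\Delta_k(\mathcal C_b)|<\binom{n}{k}$, reading off $|\Delta_k(\mathcal C_b)|$ from the cascade (Macaulay) representation of $b$. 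I expect this verification to be the main obstacle, for two reasons. First, the deficiency is \emph{not} monotone in $b$, so no one-set-at-a-time induction works directly: a newly added colex set may contribute no new $k$-subset to the shadow at all (for example, for $n=5$, $k=2$ one has $|\Delta_2(\mathcal C_3)|=|\Delta_2(\mathcal C_4)|=6$, yet both shadows are proper). Second, the crude Kruskal--Katona lower bound $|\Delta_k(\mathcal H)|\geq\binom{x}{k}$ with $\binom{x}{m}=b$ does \emph{not} suffice, since $\binom{x}{k}-\binom{x}{m}\to 0$ as $b\to\binom{n}{m}$, exactly where the shadow fills all of $\binom{[n]}{k}$; the proper-shadow hypothesis is what excludes these degenerate large sizes, and one must therefore work with the exact colex shadow count rather than its lower bound. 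An alternative route to the same lemma is induction on $m$ down to the base $m=k$ (where $\Delta_k(\mathcal H)=\mathcal H$ and the inequality is an equality), which reduces matters to the single-level estimate $|\partial\mathcal H|\geq|\mathcal H|+\binom{m-1}{k-1}$; this is again a Kruskal--Katona deficiency bound and carries precisely the same boundary subtlety near the full shadow.
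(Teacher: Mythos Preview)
The paper does not supply its own proof of Theorem~\ref{thm:nonempty_2_uniform}; it is quoted as a known result of Hilton and Milner, and the paper's original arguments are for the generalisations in Theorems~\ref{thm:nonempty_k1_k2_kr_other} and~\ref{thm:nonempty_k1_k2_kr_largest}. Specialised to $r=2$, $k_1=k_2=k$, the paper's method would run as follows: reduce to $L$-initial families via Theorem~\ref{thm:Hilton_theorem}; if both $|\mathcal F_i|<\binom{n-1}{k-1}$ the bound is immediate, otherwise sandwich the large family as $\mathcal R_s^{(k)}\subseteq\mathcal F_{i^*}\subseteq\mathcal R_{s+1}^{(k)}$, encode the remainders as an independent set in an auxiliary bipartite graph (Claim~\ref{clm:bipartite_neighbour}), and optimise over $s$ with Lemma~\ref{lem:Comparative_size_3}. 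Your route is quite different---complementation followed by a Kruskal--Katona shadow estimate---and is essentially the Frankl--Tokushige line of attack alluded to before Theorem~\ref{thm:nonempty_2_nonuniform}. The reduction you write is correct: cross-intersection is exactly disjointness from $\Delta_k(\overline{\mathcal F_2})$, and non-emptiness of $\mathcal F_1$ forces that shadow to be proper.

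The gap is that the shadow-deficiency lemma you isolate is not a lemma on the way to the theorem; it \emph{is} the theorem, rephrased, and you have not proved it. Your cascade-verification sketch is the right plan, but it is a genuine computation you have not carried out, and your own observations about non-monotonicity of the deficiency and the failure of the Lov\'asz form near the full shadow show you recognise this. The inductive alternative does not help either: writing $|\Delta_k(\mathcal H)|-|\mathcal H|=(|\Delta_k(\partial\mathcal H)|-|\partial\mathcal H|)+(|\partial\mathcal H|-|\mathcal H|)$ and applying the hypothesis to $\partial\mathcal H$ reduces you to the one-step bound $|\partial\mathcal H|-|\mathcal H|\geq\binom{m-1}{k-1}$ under the proper-$k$-shadow constraint, which is a deficiency inequality of exactly the same kind and no easier than the original. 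As it stands, the proposal is a correct and classical reformulation whose central step remains to be done.
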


Frankl and Tokushige established the following stronger result by using the Kruskal-Katona Theorem.

\begin{Theorem}\label{thm:nonempty_2_nonuniform}{\rm \cite{FT}}
Let $n,k$ and $l$ be positive integers with $k\geq l$ and $n\geq k+l$. If $\mathcal F_1\subseteq\binom{[n]}{k}$ and $\mathcal F_2\subseteq\binom{[n]}{l}$ are non-empty cross-intersecting, then
\begin{enumerate}
\item[$(1)$] $|\mathcal F_1|+|\mathcal F_2|\leq\binom{n}{k}-\binom{n-l}{k}+1;$
\item[$(2)$] if $|\mathcal F_2|\geq\binom{n-1}{l-1}$, then
$|\mathcal F_1|+|\mathcal F_2|\leq \left\{
\begin{array}{ll}
\binom{n}{k}-\binom{n-k}{k}+1, & \text{if } k=l\geq2;\\
\binom{n-1}{k-1}+\binom{n-1}{l-1}, & \text{otherwise}.
\end{array}
\right.$
\end{enumerate}
\end{Theorem}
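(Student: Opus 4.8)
The plan is to reduce Theorem~\ref{thm:nonempty_2_nonuniform} to the Kruskal--Katona theorem via a shadow reformulation. For a family $\mathcal G$, write $\Delta_k(\mathcal G)=\{A\in\binom{[n]}{k}:A\subseteq B\ \text{for some}\ B\in\mathcal G\}$ for the set of $k$-sets it covers. Passing to complements $\overline{\mathcal F_2}\subseteq\binom{[n]}{n-l}$, the relation $F_1\cap F_2\neq\emptyset$ is the same as $F_1\not\subseteq[n]\setminus F_2$, so a non-empty cross-intersecting pair is precisely a pair with $\emptyset\neq\mathcal F_1\subseteq\binom{[n]}{k}\setminus\Delta_k(\overline{\mathcal F_2})$ and $\mathcal F_2\neq\emptyset$, whence
\[
|\mathcal F_1|+|\mathcal F_2|\leq\binom nk-|\Delta_k(\overline{\mathcal F_2})|+|\mathcal F_2|.
\]
Here $k\leq n-l$ makes $\Delta_k$ a genuine iterated shadow. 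For a fixed value $|\mathcal F_2|=m$ the right-hand side is largest when $|\Delta_k(\overline{\mathcal F_2})|$ is smallest, and by the Kruskal--Katona theorem this happens when $\overline{\mathcal F_2}$ is the initial segment $\mathcal C_m$ of the colex order on $\binom{[n]}{n-l}$; conversely the pair $\mathcal F_2=\overline{\mathcal C_m}$, $\mathcal F_1=\binom{[n]}{k}\setminus\Delta_k(\mathcal C_m)$ is cross-intersecting and attains equality. Thus everything reduces to the one-variable maximisation of
\[
\phi(m)=\binom nk+m-|\Delta_k(\mathcal C_m)|
\]
over the admissible range $m\geq1$, $\Delta_k(\mathcal C_m)\neq\binom{[n]}{k}$ (the range in which $\mathcal F_1$ can be non-empty); part~(2) is the same maximisation restricted to $m\geq\binom{n-1}{l-1}$.

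For part~(1) the heart of the matter is the shadow inequality
\[
|\Delta_k(\mathcal C_m)|\geq m+\binom{n-l}{k}-1
\]
throughout the admissible range. The continuous Lov\'asz form of Kruskal--Katona is too weak here: when $m=\binom{x}{n-l}$ it only gives $|\Delta_k(\mathcal C_m)|\geq\binom xk$, and $\binom xk-\binom x{n-l}$ drops below $\binom{n-l}{k}-1$ as $x$ grows. I would therefore use the exact integer form: expand $m$ in its cascade representation, apply the Kruskal--Katona formula for the iterated shadow, and bound the resulting difference by a direct estimate on the cascade digits, the minimum value $\binom{n-l}{k}-1$ being attained at $m=1$ and again at the later ``saturated'' sizes. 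This gives $\phi(m)\leq\binom nk-\binom{n-l}{k}+1$ throughout, with equality at $m=1$, that is, for $\mathcal F_2$ a single $l$-set together with all $k$-sets meeting it; this is part~(1).

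Part~(2) splits according to whether $k=l$ or $k>l$. When $k=l$ the part~(1) extremal pair is symmetric in the two uniformities, so interchanging $\mathcal F_1$ and $\mathcal F_2$ produces a second extremal pair in which the large family, of size $\binom nk-\binom{n-k}{k}\geq\binom{n-1}{k-1}=\binom{n-1}{l-1}$, now plays the role of $\mathcal F_2$; this pair already lies in the range $m\geq\binom{n-1}{l-1}$ and realises the value $\binom nk-\binom{n-k}{k}+1$, so together with the part~(1) upper bound it pins the maximum there. When $k>l$ no such symmetry exists, and the assertion is that on $m\geq\binom{n-1}{l-1}$ the function $\phi$ is maximised at the left endpoint $m=\binom{n-1}{l-1}$, where $\mathcal C_m$ is the family of all $(n-l)$-subsets of $[n-1]$, so that $|\Delta_k(\mathcal C_m)|=\binom{n-1}{k}$ and $\phi(m)=\binom{n-1}{k-1}+\binom{n-1}{l-1}$, the double star through a common point.

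Establishing this last bound is where I expect the real difficulty, and it is the main obstacle. The function $\phi$ is \emph{not} monotone: $m-|\Delta_k(\mathcal C_m)|$ climbs back to a local maximum at every ``saturated'' cascade size, so one cannot read off the maximum from an endpoint. I would instead use the exact cascade formula to compare the heights of all these peaks on the range $m\geq\binom{n-1}{l-1}$, and show by a term-by-term convexity estimate that genuinely uses $k>l$ that none exceeds the double-star value (whereas for $k=l$ a full-height peak survives, matching the symmetric construction above). Finally, tracking equality through the Kruskal--Katona step --- for $n>k+l$ the only shadow-minimisers of a given size are the colex initial segments and their images under permutations of $[n]$ --- yields the stated characterisation of the extremal families.
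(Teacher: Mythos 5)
Your reduction is the right one, and in fact it is the known route: the paper does not prove this theorem at all (it is quoted from Frankl--Tokushige \cite{FT}, and the surrounding text notes that their proof uses the Kruskal--Katona theorem), and the complementation trick $\mathcal F_2\mapsto\overline{\mathcal F_2}\subseteq\binom{[n]}{n-l}$, the identity $|\mathcal F_1|\leq\binom{n}{k}-|\Delta_k(\overline{\mathcal F_2})|$, and the passage to colex initial segments $\mathcal C_m$ are exactly how one sets up that proof. Your bookkeeping is also accurate where you carry it out: equality of $\phi(m)=\binom{n}{k}+m-|\Delta_k(\mathcal C_m)|$ with the part (1) bound at $m=1$ and at the saturated sizes, the identification of $m=\binom{n-1}{l-1}$ with the two-star pair of value $\binom{n-1}{k-1}+\binom{n-1}{l-1}$, and the swap argument that settles part (2) in the case $k=l$ are all correct.

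The genuine gap is that the two inequalities carrying the entire content of the theorem are never proved. For part (1) you need $|\Delta_k(\mathcal C_m)|\geq m+\binom{n-l}{k}-1$ throughout the admissible range, and for part (2) with $k>l$ you need that no cascade peak with $m\geq\binom{n-1}{l-1}$ exceeds $\binom{n-1}{k-1}+\binom{n-1}{l-1}$; for both you offer only intentions (``expand $m$ in its cascade representation\dots bound the resulting difference by a direct estimate on the cascade digits,'' ``a term-by-term convexity estimate that genuinely uses $k>l$''), and you yourself label the second the main obstacle. Since the reduction to maximizing $\phi$ is routine, everything substantive in the theorem is concentrated precisely in these two estimates, so as written the proposal is a correct plan rather than a proof: a referee could not check the decisive step, and the non-monotonicity of $\phi$ that you rightly point out is exactly why the peak comparison cannot be waved through. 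A secondary inaccuracy: your closing claim that for $n>k+l$ the only shadow-minimizers of a given size are colex initial segments up to permutation of $[n]$ is false in general; uniqueness of Kruskal--Katona minimizers holds only at special sizes such as $m=\binom{n-s}{k-s}$ (this is the F\"uredi--Griggs/M\"ors statement the paper records as Lemma \ref{lem:non_intersecting}). This does not damage the theorem as stated, which has no equality clause, but your added characterization of extremal families would need to invoke that lemma at the specific extremal sizes rather than a blanket uniqueness assertion.
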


Shi, Frankl and Qian \cite{SFQ} gave another generalization of Theorem \ref{thm:nonempty_2_uniform} by extending two families to arbitrary number of families.

\begin{Theorem}\label{thm:nonempty_r_uniform}{\rm \cite[Theorem 1.5]{SFQ}}
Let $r\geq2$ and $n,k$ be positive integers with $n\geq 2k$. If $\mathcal F_1,\mathcal F_2,\dots,\mathcal F_r\subseteq\binom{[n]}{k}$ are non-empty cross-intersecting families, then
$$\sum_{i=1}^{r}|\mathcal F_i|\leq\max\left\{\binom{n}{k}-\binom{n-k}{k}+r-1,\ r\binom{n-1}{k-1}\right\}$$
with equality if and only if
\begin{enumerate}
\item[$(1)$] if $n>2k$, then either there exists $x\in[n]$ such that $\mathcal F_i=\{F\in\binom{[n]}{k}:x\in F\}$ for every $i\in [r]$, or there exist $i^*\in [r]$ and  $S\in\binom{[n]}{k}$ such that $\mathcal F_{i^*}=\{F\in\binom{[n]}{k}:F\cap S\neq\emptyset\}$ and $\mathcal F_i=\{S\}$ for every $i\in[r]\setminus\{i^*\}$;
\item[$(2)$] if $n=2k$, then
\begin{enumerate}
\item[$(i)$] when $r=2$, $\mathcal F_1\subseteq\binom{[n]}{k}$ with $0<|\mathcal F_1|<\binom{n}{k}$ and $\mathcal F_2=\binom{[n]}{k}\setminus\overline{\mathcal F_1}$;
\item[$(ii)$] when $r\geq3$,  $\mathcal F_i=\mathcal F$ for every $i\in [r]$, where $\mathcal F\subseteq\binom{[n]}{k}$ is an intersecting family with $|\mathcal F|=\binom{n-1}{k-1}$.
\end{enumerate}
\end{enumerate}
\end{Theorem}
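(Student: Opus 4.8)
The plan is to combine the shifting method with induction on $r$, using the two-family theorem of Frankl and Tokushige (Theorem \ref{thm:nonempty_2_nonuniform}) as the base case $r=2$ and Hilton's theorem (Theorem \ref{thm:cross_inter}) to dispose of the range $r\ge n/k$. First I would apply the standard shifting operators $S_{ab}$ ($a<b$) to all $r$ families simultaneously; this preserves each $|\mathcal F_i|$, preserves non-emptiness, and preserves the cross-intersecting property, so we may assume every $\mathcal F_i$ is shifted. Two structural consequences then drive the whole argument: since a non-empty shifted family is a down-set in the shift order whose unique minimum is $[k]=\{1,\dots,k\}$, every $\mathcal F_i$ contains $[k]$; and since $r\ge2$, for each $i$ there is some $j\ne i$ with $[k]\in\mathcal F_j$, so every member of every family meets $[k]$. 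Writing $C:=\binom nk-\binom{n-k}k$ for the number of $k$-sets meeting $[k]$, this gives $\bigcup_i\mathcal F_i\subseteq\mathcal U:=\{F:F\cap[k]\ne\emptyset\}$ with $|\mathcal U|=C$.

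Next I would clear away the easy regimes. If $r\ge n/k$, then already by Theorem \ref{thm:cross_inter} (which does not require non-emptiness) we have $\sum_i|\mathcal F_i|\le r\binom{n-1}{k-1}$, which is at most the stated maximum. So assume $r<n/k$ and argue by induction on $r$. In the inductive step, suppose some family has size $1$; after shifting it must equal $\{[k]\}$. Deleting it leaves $r-1$ non-empty cross-intersecting families, to which the induction hypothesis applies, bounding their sum by $\max\{C+(r-2),(r-1)\binom{n-1}{k-1}\}$. Adding back the single set gives $\max\{C+(r-2),(r-1)\binom{n-1}{k-1}\}+1$, and since $(r-1)\binom{n-1}{k-1}+1\le r\binom{n-1}{k-1}$ this is at most $\max\{C+r-1,r\binom{n-1}{k-1}\}$, as required. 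Thus the only surviving case is that every family has size at least $2$ and $r<n/k$.

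For this core case I would split on $M:=\max_i|\mathcal F_i|$, say $M=|\mathcal F_1|$. If $M\le\binom{n-1}{k-1}$ then trivially $\sum_i|\mathcal F_i|\le rM\le r\binom{n-1}{k-1}$ and we are done. The hard sub-case, and the main obstacle of the whole proof, is $M>\binom{n-1}{k-1}$. Here every other family lies in $\mathcal T(\mathcal F_1):=\{B:B\cap A\ne\emptyset\text{ for all }A\in\mathcal F_1\}$, and since $|\mathcal F_1|\ge\binom{n-1}{k-1}$, Theorem \ref{thm:nonempty_2_nonuniform} applied to the cross-intersecting pair $(\mathcal F_1,\mathcal T(\mathcal F_1))$ gives $|\mathcal F_j|\le|\mathcal T(\mathcal F_1)|\le C+1-M$ for every $j\ge2$. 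The difficulty is that the crude estimate $\sum_{j\ge2}|\mathcal F_j|\le(r-1)(C+1-M)$ is far too weak (it already fails for $r\ge3$); one must instead exploit that $\mathcal F_2,\dots,\mathcal F_r$ are themselves mutually cross-intersecting while trapped in the small set $\mathcal T(\mathcal F_1)$, and prove a bound on $\sum_{j\ge2}|\mathcal F_j|$ that is uniform over the whole range $\binom{n-1}{k-1}<M\le C$. This is precisely a Hilton–Milner-type trade-off: the more $\mathcal F_1$ exceeds the star size $\binom{n-1}{k-1}$, the more its members avoiding a fixed element force the remaining families to contract, and one needs to quantify this so that $M+\sum_{j\ge2}|\mathcal F_j|\le C+r-1$. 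I expect to establish it either by analysing the shifted structure of $\mathcal T(\mathcal F_1)$ directly, or by a secondary induction that plays $|\mathcal T(\mathcal F_1)|$ off against the mutual intersection of $\mathcal F_2,\dots,\mathcal F_r$; this is where I anticipate essentially all of the technical effort to lie.

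Finally, for the equality characterization I would trace equality back through each step. Equality in the $r\binom{n-1}{k-1}$ branch forces every $\mathcal F_i$ to be a maximum star, and checking compatibility with cross-intersection (using $n>2k$) shows they must be the \emph{same} star. Equality in the $C+r-1$ branch forces the singleton-peeling to be tight at every stage, which unwinds to the Hilton–Milner configuration $\mathcal F_{i^*}=\{F:F\cap S\ne\emptyset\}$ and $\mathcal F_i=\{S\}$ for $i\ne i^*$. Throughout one must undo the shifting to recover the non-shifted extremal families, and treat the boundary $n=2k$ separately, where the complementation symmetry $F\mapsto[n]\setminus F$ produces the additional extremal families described in part $(2)$ of the statement.
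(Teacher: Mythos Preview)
Your outline is coherent up to the point you yourself flag, but that point is a genuine gap, not a detail. In the core case $M>\binom{n-1}{k-1}$ you observe that the pairwise bound $|\mathcal F_j|\le C+1-M$ is too weak, and then say only that you ``expect to establish'' the correct inequality by exploiting the mutual cross-intersection of $\mathcal F_2,\dots,\mathcal F_r$. No mechanism is given. Concretely, you need $M+\sum_{j\ge2}|\mathcal F_j|\le\max\{C+r-1,\,r\binom{n-1}{k-1}\}$ uniformly over the whole range, and nothing in your proposal forces this: the induction on $r$ does not bite (no family is a singleton), the crude sum $(r-1)(C+1-M)$ overshoots, and you never say which structural feature of shifted families you would use. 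As written, the argument stalls exactly at the heart of the theorem.

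The paper's route is different and avoids this impasse. First, it does not use shifting but the Hilton form of Kruskal--Katona (Theorem~\ref{thm:Hilton_theorem}), reducing to $L$-initial families. This is strictly more rigid than shifted: once $|\mathcal F_{i^*}|\ge\binom{n-1}{k_{i^*}-1}$ one gets a sandwich $\mathcal R_s^{(k_{i^*})}\subseteq\mathcal F_{i^*}\subseteq\mathcal R_{s+1}^{(k_{i^*})}$ and $\mathcal P_{s+1}^{(k_i)}\subseteq\mathcal F_i\subseteq\mathcal P_s^{(k_i)}$ for all $i\neq i^*$ (Lemma~\ref{lem:n>k_j+h_intinal}). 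The residual problem is then the maximum independent set in an explicit biregular bipartite graph, and a one-line degree-counting claim (Claim~\ref{clm:bipartite_neighbour}) shows the optimum is at one of the endpoints $s$ or $s+1$; Lemma~\ref{lem:Comparative_size_3} then pushes $s$ to $1$ or $\bar k$. For the boundary $n=2k$ the paper instead uses a weighted two-family bound (Theorem~\ref{thm:cross_inter_famliysize} with $c=r-1$) and averages: $\sum_i|\mathcal F_i|=\frac{1}{r-1}\sum_{j\ne i^*}(|\mathcal F_{i^*}|+(r-1)|\mathcal F_j|)$. This averaging trick is exactly the missing ingredient in your core case as well; if you invoke Theorem~\ref{thm:cross_inter_famliysize} (which already needs Kruskal--Katona, not just shifting) with $c=r-1$ you get the inequality in one stroke, but then the extremal characterisation for $n>2k$ still requires the $L$-initial structural analysis, so your shifting framework would have to be replaced anyway.
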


In this paper, we examine the structure of the largest non-empty cross-intersecting families $\mathcal F_1\subseteq\binom{[n]}{k_1},\mathcal F_2\subseteq\binom{[n]}{k_2},\dots,\mathcal F_r\subseteq\binom{[n]}{k_r}$, where $k_1,k_2,\dots,k_r$ are positive integers. We are to prove the following theorem.

\begin{Theorem}\label{thm:nonempty_k1_k2_kr_other}
Let $r\geq 2$ and $n,k_1,k_2,\dots,k_r$ be positive integers. Let $\mathcal F_1\subseteq\binom{[n]}{k_1},\mathcal F_2\subseteq\binom{[n]}{k_2},\dots,\mathcal F_r\subseteq\binom{[n]}{k_r}$ be non-empty cross-intersecting families with $|\mathcal F_{i^*}|\geq\binom{n-1}{k_{i^*}-1}$ for some $i^*\in[r]$. Let $\bar{k}=\min\{k_i:i\in[r]\setminus\{i^*\}\}$. If $n\geq k_i +k_{i^*}$ for every $i\in[r]\setminus\{i^*\}$, then
$$\sum_{i=1}^{r}|\mathcal F_i|\leq\max\left\{\binom{n}{k_{i^*}}-\binom{n-\bar{k}}{k_{i^*}}+\sum_{i\in[r]\setminus\{{i^*}\}}
\binom{n-\bar{k}}{k_i-\bar{k}},\
\sum_{i=1}^{r}\binom{n-1}{k_i-1}\right\}$$
with equality if and only if
\begin{enumerate}
\item[$(1)$] if $k_i=\bar{k}$ for every $i\in[r]\setminus\{i^*\}$ and $n=\bar{k}+k_{i^*}$, then
\begin{enumerate}
\item[$(i)$] when $r=2$, $\mathcal F_{i^*}=\binom{[n]}{k_{i^*}}\setminus\overline{\mathcal F_{3-i^*}}$ and $1\leq|\mathcal F_{3-i^*}|\leq\binom{n-1}{\bar{k}-1}$;
\item[$(ii)$] when $r>2$ and $n>2\bar{k}$, there exists $x\in[n]$ such that $\mathcal F_{i^*}=\{F\in\binom{[n]}{k_{i^*}}:x\in F\}$ and $\mathcal F_i=\{F\in\binom{[n]}{\bar{k}}:x\in F\}$ for every $i\in[r]\setminus\{i^*\}$;
\item[$(iii)$] when $r>2$ and $n\leq2\bar{k}$, $\mathcal F_i=\mathcal F$ for every $i\in[r]\setminus\{i^*\}$ and $\mathcal F_{i^*}=\binom{[n]}{k_{i^*}}\setminus\overline{\mathcal F}$, where $\mathcal F\subseteq \binom{[n]}{\bar{k}}$ is an intersecting family with $|\mathcal F|=\binom{n-1}{\bar{k}-1}$;
\end{enumerate}
\item[$(2)$] if $n>\bar{k}+k_{i^*}$, then there exists $S\in\binom{[n]}{s}$ with $s=1$ or $\bar{k}$ such that $\mathcal F_{i^*}=\{F\in\binom{[n]}{k_{i^*}}:S\cap F\neq\emptyset\}$ and $\mathcal F_i=\{F\in\binom{[n]}{k_i}:S\subseteq F\}$ for every $i\in[r]\setminus\{i^*\}$.
\end{enumerate}
\end{Theorem}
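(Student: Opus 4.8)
The plan is to fix the large family $\mathcal{F}_{i^*}$, bound every other family by the largest family that can cross-intersect it, and then optimise over $\mathcal{F}_{i^*}$ using the Kruskal--Katona theorem.

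\emph{Reduction to shadows.} For each $i\neq i^*$ every $F\in\mathcal{F}_i$ meets every member of $\mathcal{F}_{i^*}$, so $\mathcal{F}_i\subseteq \mathcal{T}_i:=\{F\in\binom{[n]}{k_i}:F\cap G\neq\emptyset\text{ for all }G\in\mathcal{F}_{i^*}\}$, and hence $\sum_{i=1}^{r}|\mathcal{F}_i|\leq |\mathcal{F}_{i^*}|+\sum_{i\neq i^*}|\mathcal{T}_i|$. Passing to the complement $\overline{\mathcal{F}_{i^*}}\subseteq\binom{[n]}{n-k_{i^*}}$, a $k_i$-set lies outside $\mathcal{T}_i$ iff it is contained in some member of $\overline{\mathcal{F}_{i^*}}$; since $n\geq k_i+k_{i^*}$ gives $k_i\leq n-k_{i^*}$, this yields $|\mathcal{T}_i|=\binom{n}{k_i}-|\Delta_{k_i}(\overline{\mathcal{F}_{i^*}})|$, where $\Delta_{k_i}$ is the $k_i$-th lower shadow, and non-emptiness of $\mathcal{F}_i$ forces $\Delta_{k_i}(\overline{\mathcal{F}_{i^*}})\neq\binom{[n]}{k_i}$.

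\emph{Optimising over $\mathcal{F}_{i^*}$.} Writing $m=|\mathcal{F}_{i^*}|$, the hypothesis reads $m\geq\binom{n-1}{n-k_{i^*}}$. By the Kruskal--Katona theorem the colexicographic initial segment $\mathcal{C}_m\subseteq\binom{[n]}{n-k_{i^*}}$ of size $m$ simultaneously minimises every lower shadow, so $\sum_i|\mathcal{F}_i|\leq\Phi(m):=m+\sum_{i\neq i^*}\big(\binom{n}{k_i}-|\Delta_{k_i}(\mathcal{C}_m)|\big)$. Since $m\geq\binom{n-1}{n-k_{i^*}}$, the segment $\mathcal{C}_m$ contains every member of $\binom{[n]}{n-k_{i^*}}$ avoiding the point $n$; peeling this point off expresses $\Phi(m)$ as $\binom{n-1}{k_{i^*}-1}$ plus the same functional built from the parameters $(n-1,\,k_{i^*},\,\{k_i-1\}_{i\neq i^*})$ evaluated at $m'=m-\binom{n-1}{n-k_{i^*}}$. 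Iterating this recursion reduces the optimisation to two canonical segments: the smallest admissible one, where $\mathcal{F}_{i^*}$ is a star and $\Phi=\sum_i\binom{n-1}{k_i-1}$ (configuration (B)); and the largest one permitted by non-emptiness of the $\bar{k}$-uniform family, where $\mathcal{F}_{i^*}=\{F:F\cap S\neq\emptyset\}$ for a $\bar{k}$-set $S$ and $\Phi=\binom{n}{k_{i^*}}-\binom{n-\bar{k}}{k_{i^*}}+\sum_{i\neq i^*}\binom{n-\bar{k}}{k_i-\bar{k}}$ (configuration (A)). Comparing these two endpoint values yields the asserted maximum. \textbf{The main obstacle is exactly this optimisation}: the discrete shadow increments of $\mathcal{C}_m$ are not monotone in $m$, so a naive convexity argument fails, and one must exploit the precise structure of colex shadows (through the peeling recursion above, or equivalently the cascade representation) to rule out any interior maximiser and to show that one of configurations (A), (B) always wins.

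\emph{Equality.} Equality in the sum bound forces $\mathcal{F}_i=\mathcal{T}_i$ for every $i\neq i^*$ and forces $\overline{\mathcal{F}_{i^*}}$ to attain the Kruskal--Katona minimum at an optimal $m$; one checks directly that in configurations (A), (B) the resulting families $\mathcal{T}_i$ are indeed mutually cross-intersecting, so the bound is attained. When $n>\bar{k}+k_{i^*}$, the uniqueness part of Kruskal--Katona identifies $\overline{\mathcal{F}_{i^*}}$ (up to isomorphism) with the colex segment at one of the two endpoints, giving $\mathcal{F}_{i^*}=\{F:F\cap S\neq\emptyset\}$ and $\mathcal{F}_i=\{F:S\subseteq F\}$ with $|S|\in\{1,\bar{k}\}$, which is part (2). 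The degenerate boundary $n=\bar{k}+k_{i^*}$, which forces every $k_i=\bar{k}=n-k_{i^*}$, must be treated separately: there $\Delta_{\bar{k}}$ is the identity and $\mathcal{T}_i=\binom{[n]}{\bar{k}}\setminus\overline{\mathcal{F}_{i^*}}$, so $\Phi(m)=(r-1)\binom{n}{\bar{k}}-(r-2)m$; this is constant when $r=2$ (yielding the complementary pair of part (1)(i)) and strictly decreasing when $r\geq3$ (forcing $m$ minimal), and invoking Theorem~\ref{thm:nonempty_2_nonuniform}, Theorem~\ref{thm:cross_inter} and Theorem~\ref{thm:EKR} separates the star case $n>2\bar{k}$ in (ii) from the general intersecting case $n\leq2\bar{k}$ in (iii).
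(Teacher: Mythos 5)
Your reduction is sound as far as it goes: bounding each $\mathcal{F}_i$, $i\neq i^*$, by the maximal family cross-intersecting $\mathcal{F}_{i^*}$, rewriting $|\mathcal{T}_i|=\binom{n}{k_i}-|\Delta_{k_i}(\overline{\mathcal{F}_{i^*}})|$, and compressing $\overline{\mathcal{F}_{i^*}}$ to a colex segment is the shadow-side mirror of the paper's first step, which uses Hilton's lex-compression (Theorem \ref{thm:Hilton_theorem}) to pass to $L$-initial families. Your treatment of the degenerate case $n=\bar{k}+k_{i^*}$ is also correct, and the identity $\Phi(m)=(r-1)\binom{n}{\bar{k}}-(r-2)m$ is a legitimate, arguably more direct, substitute for the paper's appeal to Theorem \ref{thm:cross_inter_famliysize} with $c=r-1$ in Lemma \ref{lem:n=k_j+h}. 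The genuine gap is exactly where you write ``the main obstacle'': you never prove that $\Phi(m)$ is maximised at one of the two endpoint segments, and the peeling recursion you offer in its place does not iterate. After one peel the hypothesis $m\geq\binom{n-1}{n-k_{i^*}}$ that made the first peel canonical is lost --- $m'$ can be any value between $0$ and the non-emptiness ceiling --- so the ``same functional'' at parameters $(n-1,k_{i^*},\{k_i-1\}_{i\neq i^*})$ is not an instance of the statement being proved, and nothing in your sketch rules out an interior maximiser. This is precisely what the paper's two key steps accomplish, and neither has an analogue in your proposal: (a) the Claim inside Lemma \ref{lem:n>k_j+h_intinal}, where for $\mathcal{R}_{s}^{(k_{i^*})}\subseteq\mathcal{F}_{i^*}\subseteq\mathcal{R}_{s+1}^{(k_{i^*})}$ the leftover families are encoded as an independent set of an auxiliary $r$-partite graph whose bipartite pieces are biregular, and the neighbourhood bound $|N_{\mathcal{X}_i}(\mathcal{Q})|\geq |\mathcal{X}_i||\mathcal{Q}|/|\mathcal{X}_{i^*}|$ (connectivity when $n>k_i+k_{i^*}$, a perfect matching when $n=k_i+k_{i^*}$) forces $\mathcal{F}_{i^*}\in\{\mathcal{R}_{s}^{(k_{i^*})},\mathcal{R}_{s+1}^{(k_{i^*})}\}$; and (b) Lemma \ref{lem:Comparative_size_3}, a binomial computation exploiting $n\geq k_i+k_{i^*}$ and $n>\bar{k}+k_{i^*}$ to show $g(s)$ admits no interior local maximum on $[1,\bar{k}]$, hence $\max g=g(1)$ or $g(\bar{k})$. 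Your proposal asserts the conclusion of (a) and (b) but supplies neither, and since this is the entire analytic content of the theorem, the attempt is incomplete.

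A secondary soft spot: for equality you invoke ``the uniqueness part of Kruskal--Katona,'' but Kruskal--Katona has no off-the-shelf uniqueness statement at these cardinalities. What is actually needed is the F\"uredi--Griggs/M\"ors result (Lemma \ref{lem:non_intersecting}), which the paper applies in Lemma \ref{lem:n>k_j+h} to the \emph{small} families $\mathcal{F}_i$ of size $\binom{n-s}{k_i-s}$ (for indices with $n>k_i+k_{i^*}$), recovering a common set $S$ via the maximality Lemma \ref{lem:nonempty_2_uniform_1_inter} and only then deducing $\mathcal{F}_{i^*}=\mathcal{R}_{S}^{(k_{i^*})}$; indices with $n=k_i+k_{i^*}$ require the separate complement argument $\mathcal{F}_i=\binom{[n]}{k_i}\setminus\overline{\mathcal{F}_{i^*}}$. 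This portion of your sketch is repairable by citing that lemma, but as written it is unjustified.
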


Applying Theorem \ref{thm:nonempty_k1_k2_kr_other}, we can give a solution to Problems 4.3 in \cite{SFQ} as follows.

\begin{Theorem}\label{thm:nonempty_k1_k2_kr_largest}
Let $r\geq 2$ and $n,k_1,k_2,\dots,k_r$ be positive integers. Let $\mathcal F_1\subseteq\binom{[n]}{k_1},\mathcal F_2\subseteq\binom{[n]}{k_2},\dots,\mathcal F_r\subseteq\binom{[n]}{k_r}$ be non-empty cross-intersecting families with $k_1\geq k_2\geq\cdots\geq k_r$ and $n\geq k_1+k_2$. Then
$$\sum_{i=1}^{r}|\mathcal F_i|\leq\max\left\{\binom{n}{k_1}-\binom{n-k_r}{k_1}+\sum_{i=2}^{r}\binom{n-k_r}{k_i-k_r},\
\sum_{i=1}^{r}\binom{n-1}{k_i-1}\right\}$$
with equality if and only if
\begin{enumerate}
\item[$(1)$] if $n=k_1+k_r$, then
    \begin{enumerate}
        \item[$(i)$] when $r=2$, $\mathcal F_{1}=\binom{[n]}{k_1}\setminus\overline{\mathcal F_{2}}$ and $0<|\mathcal F_{2}|<\binom{n}{k_{2}}$;
        \item[$(ii)$] when $r>2$ and $k_1>k_2=\cdots=k_r$, there exists $x\in[n]$ such that $\mathcal F_i=\{F\in\binom{[n]}{k_i}:x\in F\}$ for every $i\in[r]$;
        \item[$(iii)$] when $r>2$ and $k_1=k_{2}=\dots=k_{r}$, $\mathcal F_i=\mathcal F$ for every $i\in[r]$, where $\mathcal F\subseteq \binom{[n]}{k_1}$ is an intersecting family with $|\mathcal F|=\binom{n-1}{k_1-1}$;
    \end{enumerate}
\item[$(2)$] if $n>k_1+k_r$, then either there exists $x\in[n]$ such that $\mathcal F_{i}=\{F\in\binom{[n]}{k_i}:x\in F\}$ for every $i\in[r]$, or there exists $S\in\binom{[n]}{k_r}$ such that $\mathcal F_{j}=\{F\in\binom{[n]}{k_{j}}:F\cap S\neq\emptyset\}$ for some $j\in[r]$ with $k_j=k_1$ and $\mathcal F_i=\{F\in\binom{[n]}{k_i}:S\subseteq F\}$ for every $i\in[r]\setminus\{j\}$.
\end{enumerate}
\end{Theorem}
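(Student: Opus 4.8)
The plan is to derive Theorem \ref{thm:nonempty_k1_k2_kr_largest} from Theorem \ref{thm:nonempty_k1_k2_kr_other} by choosing the distinguished index $i^*$ well and then comparing binomial sums. Throughout write $M_1=\binom{n}{k_1}-\binom{n-k_r}{k_1}+\sum_{i=2}^{r}\binom{n-k_r}{k_i-k_r}$ and $M_2=\sum_{i=1}^{r}\binom{n-1}{k_i-1}$ for the two quantities in the asserted maximum, and for any $j\in[r]$ let $\bar{k}_j=\min_{i\neq j}k_i$ and $H_j=\binom{n}{k_j}-\binom{n-\bar{k}_j}{k_j}+\sum_{i\neq j}\binom{n-\bar{k}_j}{k_i-\bar{k}_j}$ denote the Hilton--Milner-type bound produced by Theorem \ref{thm:nonempty_k1_k2_kr_other} when $i^*=j$. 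The first observation, which makes the reduction possible, is that the ordering $k_1\geq\cdots\geq k_r$ forces $k_i+k_j\leq k_1+k_2\leq n$ for every pair of distinct indices; hence the hypothesis $n\geq k_i+k_{i^*}$ of Theorem \ref{thm:nonempty_k1_k2_kr_other} is automatically satisfied for \emph{every} choice of $i^*\in[r]$, so we are free to invoke it with whichever index we like.

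First I would dispose of the trivial case: if $|\mathcal{F}_i|<\binom{n-1}{k_i-1}$ for all $i$, then $\sum_i|\mathcal{F}_i|\leq M_2-r<M_2$ and the bound holds strictly, so no extremal family arises here. Otherwise the set $I^*=\{i:|\mathcal{F}_i|\geq\binom{n-1}{k_i-1}\}$ is non-empty, and by the previous paragraph applying Theorem \ref{thm:nonempty_k1_k2_kr_other} with any $i^*\in I^*$ gives $\sum_i|\mathcal{F}_i|\leq\max\{H_{i^*},M_2\}$. I would choose $i^*\in I^*$ of maximal uniformity $k_{i^*}$. The clean case is $k_{i^*}=k_1$ (some family of largest uniformity is large): then $\bar{k}_{i^*}=k_r$, and since the single value $k_1$ is the one omitted from the sum term of both $M_1$ and $H_{i^*}$, the two sums run over identical multisets of $k$-values and $H_{i^*}=M_1$ exactly, so $\sum_i|\mathcal{F}_i|\leq\max\{M_1,M_2\}$. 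The equality characterisation then transfers directly: configuration $(2)$ of Theorem \ref{thm:nonempty_k1_k2_kr_other} with $s=1$ is the common star, and with $s=k_r$ it is the Hilton--Milner-type family whose spread-out part sits on an index of uniformity $k_1$; moreover $n=k_1+k_r$ together with $n\geq k_1+k_2$ forces $k_2=\cdots=k_r$, so configuration $(1)$ of Theorem \ref{thm:nonempty_k1_k2_kr_other} matches precisely the three sub-cases of case $(1)$ here.

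The substantive work is the remaining case $k_{i^*}<k_1$, i.e. every family of largest uniformity is small. Provided $i^*\neq r$ we still have $\bar{k}_{i^*}=k_r$, and the task reduces to the inequality $H_{i^*}\leq M_1$. Writing $\psi(k)=\binom{n}{k}-\binom{n-k_r}{k}-\binom{n-k_r}{k-k_r}$, which counts the $k$-subsets of $[n]$ meeting but not containing a fixed $k_r$-set, a short computation gives $M_1-H_{i^*}=\psi(k_1)-\psi(k_{i^*})$. I would prove $\psi(k_1)\geq\psi(k_{i^*})$ from two facts: complementation shows $\psi$ is symmetric, $\psi(k)=\psi(n-k)$, and $\psi$ is unimodal with peak at $n/2$; while $n\geq k_1+k_2\geq k_1+k_{i^*}$ yields $k_{i^*}\leq n-k_1$, placing $k_{i^*}$ at least as far from $n/2$ as $k_1$ is, so that unimodality gives $\psi(k_{i^*})\leq\psi(k_1)$. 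I expect establishing the unimodality of $\psi$ and orienting this distance comparison correctly—both of which hinge on $n\geq k_1+k_2$—to be the main obstacle. The leftover situation $I^*=\{r\}$, where the \emph{only} large family is the unique one of minimum uniformity (forcing $i^*=r$ and $\bar{k}_r=k_{r-1}>k_r$), falls outside this comparison and I would treat it by a direct estimate showing $H_r\leq M_2$; heuristically, making the smallest family the spread-out one while all others are forced to contain a large common set is wasteful, so this configuration cannot beat the all-stars bound.

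Finally, for the equality analysis in the hard case one must check that no \emph{new} extremal families appear. The comparison above is strict, $H_{i^*}<M_1$, unless $k_{i^*}=k_1$ or $k_{i^*}=n-k_1$; the latter, combined with $k_{i^*}\leq k_2$ and $n-k_1\geq k_2$, forces the boundary $n=k_1+k_2$ with $k_{i^*}=k_2$. On this boundary a separate check that $M_2\geq M_1$ (with equality only in degenerate configurations, e.g. $k_r=1$, where the Hilton--Milner-type family already coincides with a listed star) shows such a configuration has sum $M_1\leq\max\{M_1,M_2\}$ and is extremal only when it already appears in the stated list. Assembling the clean case, the $\psi$-comparison, the $I^*=\{r\}$ estimate, and this boundary bookkeeping yields both the bound and the full characterisation of extremal families in Theorem \ref{thm:nonempty_k1_k2_kr_largest}.
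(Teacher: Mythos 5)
Your reduction skeleton is sound and coincides with the paper's: dispose of the case where every $|\mathcal F_i|<\binom{n-1}{k_i-1}$, pick a large index $i^*$, and invoke Theorem \ref{thm:nonempty_k1_k2_kr_other} (legitimately, since $k_1\geq\cdots\geq k_r$ and $n\geq k_1+k_2$ do give $n\geq k_i+k_{i^*}$ for every pair), and your identity $M_1-H_{i^*}=\psi(k_1)-\psi(k_{i^*})$ is correct. But the quantitative heart of your argument is missing twice over. First, the comparison $H_{i^*}\leq M_1$ rests on the unimodality of $\psi$, which you assert and even flag as ``the main obstacle'' without supplying a proof; it is true (write $\psi(k)=\sum_{j=1}^{k_r-1}\binom{k_r}{j}\binom{n-k_r}{k-j}$, a convolution of log-concave sequences with interval support, hence log-concave), but the equality analysis needs strictly more, namely that $\psi$ has no plateaus below $n/2$, and your stated dichotomy ``$H_{i^*}<M_1$ unless $k_{i^*}=k_1$ or $k_{i^*}=n-k_1$'' is outright false when $k_r=1$: there $\psi\equiv 0$, so $H_{i^*}=M_1=M_2$ for \emph{every} $i^*$. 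That degeneracy must be split off before the dichotomy is invoked; your proposal only gestures at it inside the boundary discussion $n=k_1+k_2$, where it does not belong.

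Second, the leftover case $I^*=\{r\}$ with $k_{r-1}>k_r$ is handled only by the heuristic that making the smallest family the spread-out one is ``wasteful.'' This is not a routine estimate: in the paper it is exactly Lemma \ref{lem:Comparative_size_4}, whose proof is a genuine page of binomial manipulation (splitting on $k_2\geq k_1-\bar k$ versus $k_2<k_1-\bar k$ and using Lemma \ref{lem:binomial}), and whose equality clause ($r=2$ and $n=k_1+k_2$ only) is precisely what is needed to rule out new extremal families of type $(\gamma)$ in the ``only if'' direction. Note also that your $\psi$-machinery cannot be recycled here, since $H_r$ is built from a set $S$ of size $k_{r-1}$, not $k_r$, so the sums no longer differ by a single swapped term. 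For contrast: the paper compares every Hilton--Milner-type value at a non-maximal index against $M_2$ via Lemma \ref{lem:Comparative_size_4}, which covers your main case and your leftover case uniformly; your symmetry-plus-unimodality comparison against $M_1$ would, if completed, give a more conceptual proof for $i^*\neq r$, but as written the two load-bearing lemmas (strict unimodality of $\psi$ with the $k_r=1$ caveat, and $H_r\leq M_2$ with its equality characterization) are absent, so the proposal does not yet constitute a proof. Minor additional bookkeeping you gloss over: merging the two symmetric alternatives in the $r=2$, $n=k_1+k_2$ case into the single statement $(1)(i)$, and the identification $\binom{[n]}{k_1}\setminus\overline{\mathcal F}=\mathcal F$ for a maximum intersecting family when $n=2k_1$, needed to match $(1)(iii)$.
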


\section{Preliminaries}

Let $\prec_L$, or $\prec$ for short, be the lexicographic order on $\binom{[n]}{i}$ where $i\in\{1,2,...,n\}$, that is, for any two sets $A,B\in\binom{[n]}{i}$, $A\prec B$ if and only if $\min\{a: a\in A\setminus B\}<\min\{b: b\in B\setminus A\}$. For a family $\mathcal A\in\binom{[n]}{k}$, let $\mathcal A_L$ denote the family consisting of the first $|\mathcal A|$ $k$-sets in order $\prec$ in $\binom{[n]}{k}$, and call $\mathcal A$ {\em $L$-initial} if $\mathcal A_L =\mathcal A$. A powerful tool in the study of cross-intersecting families is the Kruskal-Katona Theorem (\cite{Katona66,Kruskal}), especially its reformulation due to Hilton \cite{Hilton76}.

\begin{Theorem}\label{thm:Hilton_theorem}{\rm \cite{Hilton76}}
If $\mathcal A\subseteq\binom{[n]}{k}$ and $\mathcal B\subseteq\binom{[n]}{l}$ are cross-intersecting, then $\mathcal A_L$ and $\mathcal B_L$ are cross-intersecting as well.
\end{Theorem}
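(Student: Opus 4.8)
The plan is to deduce the statement from the Kruskal–Katona theorem in essentially one shot, working with the family of transversals. Throughout I may assume $n\ge k+l$, since otherwise $k+l>n$ forces $A\cap B\neq\emptyset$ for all $A\in\binom{[n]}{k}$ and $B\in\binom{[n]}{l}$, so every pair of families is cross-intersecting and there is nothing to prove. For a family $\mathcal A\subseteq\binom{[n]}{k}$ define its transversal family
\[
T(\mathcal A)=\left\{B\in\binom{[n]}{l}: B\cap A\neq\emptyset \text{ for every } A\in\mathcal A\right\}.
\]
Then $\mathcal A$ and $\mathcal B$ are cross-intersecting precisely when $\mathcal B\subseteq T(\mathcal A)$, and likewise $\mathcal A_L,\mathcal B_L$ are cross-intersecting precisely when $\mathcal B_L\subseteq T(\mathcal A_L)$. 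So it suffices to prove two facts: (I) if $\mathcal A$ is $L$-initial then $T(\mathcal A)$ is $L$-initial; and (II) $|T(\mathcal A_L)|\ge |T(\mathcal A)|$ for every $\mathcal A$. Granting these, from $\mathcal B\subseteq T(\mathcal A)$ we get $|\mathcal B_L|=|\mathcal B|\le |T(\mathcal A)|\le|T(\mathcal A_L)|$ by (II), while (I) says $T(\mathcal A_L)$ is $L$-initial; since $\mathcal B_L$ is the $L$-initial segment of size $|\mathcal B|$, it is contained in the larger $L$-initial segment $T(\mathcal A_L)$, giving $\mathcal B_L\subseteq T(\mathcal A_L)$ as required.

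For (I), the elementary heart of the argument, I would argue by contradiction: assume $\mathcal A$ is $L$-initial, take $B\in T(\mathcal A)$ and $B'\prec B$ in $\binom{[n]}{l}$, and suppose $B'\notin T(\mathcal A)$, so some $A\in\mathcal A$ has $A\cap B'=\emptyset$, i.e.\ $A\subseteq [n]\setminus B'$. Writing $\min_k(S)$ for the set of the $k$ smallest elements of $S$ (well defined since $|[n]\setminus B|=n-l\ge k$), I would set $A^*=\min_k([n]\setminus B)$. The key observation is that $B'\prec B$ forces the smallest element $j$ of $B\,\triangle\,B'$ to lie in $B'\setminus B$; hence $[n]\setminus B$ and $[n]\setminus B'$ agree below $j$ while $j\in([n]\setminus B)\setminus([n]\setminus B')$, and comparing the sorted lists of the $k$ smallest elements gives $\min_k([n]\setminus B)\preceq\min_k([n]\setminus B')\preceq A$, the last step because $A$ is a $k$-subset of $[n]\setminus B'$ and $\min_k$ returns the lex-least such subset. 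Thus $A^*\preceq A$, so $A^*\in\mathcal A$ by $L$-initiality, yet $A^*\subseteq[n]\setminus B$ means $A^*\cap B=\emptyset$, contradicting $B\in T(\mathcal A)$.

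Fact (II) is where the real combinatorial content sits and is the step I expect to be the main obstacle; I would obtain it directly from the Kruskal–Katona theorem. Counting complements, $B\in\binom{[n]}{l}$ fails to meet some $A\in\mathcal A$ exactly when $[n]\setminus B\in\binom{[n]}{n-l}$ contains a member of $\mathcal A$, so $\binom{n}{l}-|T(\mathcal A)|$ equals the size of the upper shadow of $\mathcal A$ at level $n-l$. Maximizing $|T(\mathcal A)|$ over families of a fixed size is therefore equivalent to minimizing this upper shadow, and the Kruskal–Katona theorem (in its upper-shadow/lexicographic form) states exactly that the $L$-initial segment $\mathcal A_L$ is a minimizer; this yields $|T(\mathcal A_L)|\ge|T(\mathcal A)|$. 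Combined with (I) and the reduction above, this proves that $\mathcal A_L$ and $\mathcal B_L$ are cross-intersecting. I note that the whole argument is symmetric in the two families and compresses both simultaneously, so no separate ``one family at a time'' step is needed.
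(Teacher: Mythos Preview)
The paper does not give its own proof of this theorem: it is stated in the Preliminaries as a known reformulation of the Kruskal--Katona theorem and simply attributed to Hilton~\cite{Hilton76}. So there is no in-paper argument to compare against.

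That said, your proof is correct and is essentially the standard derivation of Hilton's reformulation from Kruskal--Katona. Both steps are sound: your argument for~(I) is clean (the chain $\min_k([n]\setminus B)\preceq\min_k([n]\setminus B')\preceq A$ works exactly as you describe), and~(II) is indeed Kruskal--Katona once you pass to complements. The only point worth making explicit, since it is easy to trip over, is why the \emph{lex}-initial segment $\mathcal A_L$ minimizes the upper shadow when classical Kruskal--Katona is stated for \emph{colex}-initial segments and lower shadows: one has $|\nabla^{(n-l)}\mathcal A|=|\partial^{(l)}\overline{\mathcal A}|$, complementation reverses lex order so $\overline{\mathcal A_L}$ is lex-final in $\binom{[n]}{n-k}$, and the ground-set reversal $i\mapsto n+1-i$ sends lex-final to colex-initial while preserving shadow sizes. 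With that bookkeeping, $|\nabla^{(n-l)}\mathcal A_L|$ equals the colex-initial shadow bound and~(II) follows. Your phrase ``in its upper-shadow/lexicographic form'' covers this, but spelling it out would remove any doubt.
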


For positive integers $k,l$ and a set $S\subseteq[n]$, let
$$\mathcal P_{S}^{(l)}=\left\{P\in\binom{[n]}{l}:S\subseteq P\right\} \text{\ and\ }\mathcal R_{S}^{(k)}=\left\{R\in\binom{[n]}{k}:|R\cap S|\geq 1\right\}.$$
Then $|\mathcal P_{S}^{(l)}|=\binom{n-|S|}{l-|S|}$ and $|\mathcal R_{S}^{(k)}|=\binom{n}{k}-\binom{n-|S|}{k}$. Clearly $\mathcal P_{S}^{(l)}$ and $\mathcal R_{S}^{(k)}$ are cross-intersecting. For a positive integer $s$, write $\mathcal P_{s}^{(l)}=\mathcal P_{[s]}^{(l)}$ and $\mathcal R_{s}^{(k)}=\mathcal R_{[s]}^{(k)}$. Clearly $\mathcal P_{s}^{(l)}$ and $\mathcal R_{s}^{(k)}$ are both $L$-initial. Note that $\mathcal P_{s}^{(l)}\subseteq\mathcal P_{s-1}^{(l)}$ for any $2\leq s\leq l$ and $\mathcal R_{s}^{(k)}\subseteq\mathcal R_{s+1}^{(k)}$ for any $s\geq 1$.

The following lemma is a slight generalization of \cite[Lemma 2.1]{SFQ}.

\begin{Lemma}\label{lem:nonempty_2_uniform_1_inter}
Let $n,k$ and $l$ be positive integers with $n\geq k+l$. For any $S\subseteq [n]$ with $1\leq |S|\leq l$, $\mathcal R_{S}^{(k)}$ $($resp. $\mathcal P_{S}^{(l)})$ is the largest family in $\binom{[n]}{k}$ $($resp. $\binom{[n]}{l})$ that is cross-intersecting with $\mathcal P_{S}^{(l)}$ $($resp. $\mathcal R_{S}^{(k)})$.
\end{Lemma}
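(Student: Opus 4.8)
The plan is to prove something slightly stronger than stated, namely that $\mathcal{R}_S^{(k)}$ is precisely the family of \emph{all} $k$-sets that cross-intersect every member of $\mathcal{P}_S^{(l)}$, and dually that $\mathcal{P}_S^{(l)}$ is precisely the family of all $l$-sets that cross-intersect every member of $\mathcal{R}_S^{(k)}$. Since both $\mathcal{R}_S^{(k)}$ and $\mathcal{P}_S^{(l)}$ are themselves cross-intersecting (as noted above), these two characterizations immediately yield that each is the unique largest family of its uniformity that is cross-intersecting with the partner. Thus the whole statement reduces to two membership characterizations, each established by a short separation argument that exploits the hypothesis $n\geq k+l$.

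For the first characterization, I would let $\mathcal{A}\subseteq\binom{[n]}{k}$ be cross-intersecting with $\mathcal{P}_S^{(l)}$ and take any $A\in\mathcal{A}$. If $A\cap S=\emptyset$, then $S\subseteq[n]\setminus A$, a set of size $n-k\geq l$; hence $S$ extends to an $l$-set $P$ with $S\subseteq P\subseteq[n]\setminus A$, so that $P\in\mathcal{P}_S^{(l)}$ and $P\cap A=\emptyset$, contradicting the cross-intersecting property. Therefore $A\cap S\neq\emptyset$, i.e. $A\in\mathcal{R}_S^{(k)}$, whence $\mathcal{A}\subseteq\mathcal{R}_S^{(k)}$. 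Conversely every $R\in\mathcal{R}_S^{(k)}$ meets $S$ and hence meets every superset of $S$, so $\mathcal{R}_S^{(k)}$ is cross-intersecting with $\mathcal{P}_S^{(l)}$. Here the inequality $l\leq n-k$ is exactly what guarantees the separating $l$-set exists.

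For the dual characterization, I would let $\mathcal{B}\subseteq\binom{[n]}{l}$ be cross-intersecting with $\mathcal{R}_S^{(k)}$ and take any $B\in\mathcal{B}$. If $S\not\subseteq B$, pick $s_0\in S\setminus B$; then $[n]\setminus(B\cup\{s_0\})$ has size $n-l-1\geq k-1$, so one can choose a $k$-set $R=\{s_0\}\cup R'$ with $R'\subseteq[n]\setminus(B\cup\{s_0\})$ of size $k-1$. This $R$ contains $s_0\in S$, so $R\in\mathcal{R}_S^{(k)}$, yet $R\cap B=\emptyset$, again a contradiction. Hence $S\subseteq B$, i.e. $B\in\mathcal{P}_S^{(l)}$, giving $\mathcal{B}\subseteq\mathcal{P}_S^{(l)}$; and conversely every $P\supseteq S$ meets every $k$-set that meets $S$, so $\mathcal{P}_S^{(l)}$ is cross-intersecting with $\mathcal{R}_S^{(k)}$. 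The room estimate $k\leq n-l$ is once more precisely the hypothesis $n\geq k+l$.

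I do not anticipate a genuine obstacle, since both directions are elementary separation arguments; the only care needed is to check that in each case the ambient complement is large enough to house the separating set, which is exactly where $n\geq k+l$ enters. The symmetry between the two claims means the second argument mirrors the first, with the roles of ``containing $S$'' and ``meeting $S$'' interchanged, so the two cases can be written in parallel.
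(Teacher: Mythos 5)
Your proposal is correct and follows essentially the same route as the paper: in both directions you take a set outside the claimed extremal family and construct a disjoint witness inside the partner family, with $n\geq k+l$ providing exactly the room needed. The only difference is cosmetic and in your favor: in the dual direction you anchor the witness on a single element $s_0\in S\setminus B$ and fill up with $k-1$ elements of $[n]\setminus(B\cup\{s_0\})$, which handles uniformly the two cases ($k\leq|S\setminus A|$ versus $k>|S\setminus A|$) that the paper's proof treats separately.
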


\begin{proof}
To prove that $\mathcal R_{S}^{(k)}$ is the largest family in $\binom{[n]}{k}$ that is cross-intersecting with $\mathcal P_{S}^{(l)}$, take $A\in\binom{[n]}{k}\setminus\mathcal R_{S}^{(k)}$. It suffices to show that there exists $C\in \mathcal P_{S}^{(l)}$ such that $A\cap C=\emptyset$. Since $A\cap S=\emptyset$, we have $|[n]\setminus(A\cup S)|=n-k-|S|\geq l-|S|$, and so there exists an $(l-|S|)$-set $B\subseteq[n]\setminus(A\cup S)$. Take $C=S\cup B$. Then $C\in\mathcal P_{S}^{(l)}$ and $A\cap C=\emptyset$.

Similarly, to prove that $\mathcal P_{S}^{(l)}$ is the largest family in $\binom{[n]}{l}$ that is cross-intersecting with $\mathcal R_{S}^{(k)}$, take $A\in\binom{[n]}{l}\setminus\mathcal P_{S}^{(l)}$. It suffices to show that there exists $C\in \mathcal R_{S}^{(k)}$ such that $A\cap C=\emptyset$. Write $|A\cap S|=y\leq |S|-1$. If $k\leq |S|-y=|S\setminus A|$, then take $C\in\binom{S\setminus A}{k}$, and so $C\in\mathcal R_{S}^{(k)}$ and $A\cap C=\emptyset$. If $k>|S|-y$, since $|[n]\setminus(A\cup S)|=n-l-|S|+y\geq k-|S|+y>0$, there exists a $(k-|S|+y)$-set $B\subseteq[n]\setminus(A\cup S)$. Take $C=(S\setminus A)\cup B$. Then $C\in\mathcal R_{S}^{(k)}$ and $A\cap C=\emptyset$.
\end{proof}

\begin{Lemma}\label{lem:Comparative_size_3}
Let $r\geq 2$ and $k_1,k_2,\dots,k_r$ be positive integers such that $k_r=\min\{k_i:2\leq i\leq r\}$. Write $$g(s):=\binom{n}{k_1}-\binom{n-s}{k_1}+\sum_{i=2}^{r}\binom{n-s}{k_i-s}.$$
If $n\geq k_1+k_i$ for every $i\in[2,r-1]$ and $n>k_1+k_r$, then $\max\{g(s):1\leq s\leq k_r\}$ is either $g(1)$ or $g(k_r)$.
\end{Lemma}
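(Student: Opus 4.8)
The plan is to analyse the forward difference $\Delta g(s):=g(s+1)-g(s)$ for $1\leq s\leq k_r-1$ and to prove that $g$ is \emph{valley-shaped} on $\{1,2,\dots,k_r\}$, i.e.\ that $\Delta g$ is negative on an initial segment and positive afterwards (with the monotone cases allowed as degenerations). A function with this sign pattern attains its maximum over an integer interval at one of the two endpoints, which is exactly the claim $\max\{g(s):1\leq s\leq k_r\}\in\{g(1),g(k_r)\}$. First I would simplify $\Delta g$ by Pascal's identity. Setting $A(s):=\binom{n-s-1}{k_1-1}$ and $B(s):=\sum_{i=2}^{r}\binom{n-s-1}{k_i-s}$, the identities $\binom{n-s}{k_1}-\binom{n-s-1}{k_1}=\binom{n-s-1}{k_1-1}$ and $\binom{n-s}{k_i-s}-\binom{n-s-1}{k_i-s-1}=\binom{n-s-1}{k_i-s}$ give
\begin{equation*}
\Delta g(s)=A(s)-B(s).
\end{equation*}
Since $s\leq k_r\leq k_i$ for every $i\in[2,r]$ and $n>k_1+k_r$, both $A(s)$ and $B(s)$ are strictly positive throughout the range, so $\sgn\Delta g(s)=\sgn\big(A(s)/B(s)-1\big)$.

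The crux is to show that $A(s)/B(s)$ is strictly increasing in $s$; then $A(s)/B(s)-1$ changes sign at most once, from negative to positive, yielding the valley shape. For this I would compute the one-step ratios
\begin{equation*}
\frac{A(s+1)}{A(s)}=\frac{n-s-k_1}{n-s-1},\qquad \frac{B_i(s+1)}{B_i(s)}=\frac{k_i-s}{n-s-1}\quad(2\leq i\leq r),
\end{equation*}
where $B_i(s):=\binom{n-s-1}{k_i-s}$ is the $i$-th summand of $B$. Thus $A(s+1)/A(s)\geq B_i(s+1)/B_i(s)$ holds exactly when $n-s-k_1\geq k_i-s$, i.e.\ when $n\geq k_1+k_i$, which is guaranteed by hypothesis for every $i\in[2,r]$ and is strict for $i=r$ because $n>k_1+k_r$. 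Hence each $B_i(s)/A(s)$ is non-increasing (strictly for $i=r$), so $B(s)/A(s)=\sum_{i=2}^{r}B_i(s)/A(s)$ is strictly decreasing, and therefore $A(s)/B(s)$ is strictly increasing.

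Combining the two steps, there is a threshold $s^{\ast}$ with $\Delta g(s)<0$ for $s<s^{\ast}$ and $\Delta g(s)>0$ for $s\geq s^{\ast}$, so the maximum is attained at $s=1$ or $s=k_r$; using $\binom{n}{k_1}-\binom{n-1}{k_1}=\binom{n-1}{k_1-1}$ one sees that $g(1)=\sum_{i=1}^{r}\binom{n-1}{k_i-1}$ and that $g(k_r)$ is the first expression in the theorem's maximum, so the two endpoint values are precisely the competing bounds. The point I expect to be the real obstacle is that $g$ is \emph{not} convex in general: a direct computation gives $\Delta^{2}g(s)=\sum_{i=2}^{r}\binom{n-s-2}{k_i-s}-\binom{n-s-2}{k_1-2}$, which can be negative (for instance $n=10$, $k_1=5$, $r=2$, $k_2=3$, $s=1$), so the naive ``$\Delta^{2}g\geq0$'' route fails. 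The whole argument instead rests on the ratio-monotonicity of $A/B$, which turns the uniform comparison ``$A$ decays more slowly than every summand $B_i$'' — itself a direct consequence of $n>k_1+k_i$ — into the single sign change of $\Delta g$.
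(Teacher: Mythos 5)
Your proof is correct and is essentially the paper's argument in different packaging: the paper rules out an interior local maximum of $g$ by combining $g(s)\geq g(s+1)$ and $g(s)\geq g(s-1)$ with the multiplier $\frac{n-s-k_1+1}{n-s}$, which is precisely your decay-rate comparison of $A(s+1)/A(s)=\frac{n-s-k_1}{n-s-1}$ against $B_i(s+1)/B_i(s)=\frac{k_i-s}{n-s-1}$, both reducing to the hypothesis $n\geq k_1+k_i$ with strictness for $i=r$. Your reformulation via the strict monotonicity of $A(s)/B(s)$ (hence a single sign change of $\Delta g$) is a slightly cleaner statement of the same mechanism, and your remark that $g$ need not be convex — e.g.\ $\Delta^2 g(1)=\binom{7}{2}-\binom{7}{3}<0$ for $n=10$, $k_1=5$, $k_2=3$ — correctly identifies why the endpoint conclusion cannot be reached by a naive convexity argument.
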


\begin{proof} When $k_r\in\{1,2\}$, the conclusion is straightforward. Assume that $k_r\geq3$. We claim that there does not exist $s\in[2,k_r-1]$ such that $g(s)\geq g(s+1)$ and $g(s)\geq g(s-1)$. Otherwise, if there exists $s\in[2,k_r-1]$ such that
$$\binom{n}{k_1}-\binom{n-s}{k_1}+\sum_{i=2}^{r}\binom{n-s}{k_i-s}\geq\binom{n}{k_1}-\binom{n-s-1}{k_1}+
\sum_{i=2}^{r}\binom{n-s-1}{k_i-s-1}\ \text{and}$$
$$\binom{n}{k_1}-\binom{n-s}{k_1}+\sum_{i=2}^{r}\binom{n-s}{k_i-s}\geq\binom{n}{k_1}-\binom{n-s+1}{k_1}+
\sum_{i=2}^{r}\binom{n-s+1}{k_i-s+1},$$
which implies
\begin{align}
\label{eqn:1-2}\sum_{i=2}^{r}\binom{n-s-1}{k_i-s}& \geq\binom{n-s-1}{k_1-1}\ \text{and}\\
\label{eqn:1-2-1}\binom{n-s}{k_1-1}& \geq\sum_{i=2}^{r}\binom{n-s}{k_i-s+1}.
\end{align}
$\eqref{eqn:1-2}+\eqref{eqn:1-2-1}\times\frac{n-s-k_1+1}{n-s}$ yields
$$\sum_{i=2}^{r}\binom{n-s-1}{k_i-s}\geq\frac{n-s-k_1+1}{n-s}\sum_{i=2}^{r}\binom{n-s}{k_i-s+1}
=\sum_{i=2}^{r}\frac{n-s-k_1+1}{k_i-s+1}\binom{n-s-1}{k_i-s}.$$
Since $n\geq k_1+k_i$ for every $i\in[2,r-1]$, we have $\frac{n-s-k_1+1}{k_i-s+1}\geq 1$, and since $n>k_1+k_r$, $\frac{n-s-k_1+1}{k_r-s+1}>1$, a contradiction. It is readily checked that the above claim implies that $\max\{g(s):1\leq s\leq k_r\}$ is $g(1)$ or $g(k_r)$.
\end{proof}

\begin{Lemma}\label{lem:binomial}
Let $n$, $k$ and $s$ be positive integers.
\begin{enumerate}
\item[$(1)$] $\binom{n}{k}-\binom{n-s}{k}=\sum\limits_{i=1}^{s}\binom{n-i}{k-1}$.
\item[$(2)$] $\binom{n}{k}-\binom{n-s}{k-s}=\sum\limits_{i=0}^{s-1}\binom{n-i-1}{k-i}$.
\end{enumerate}
\end{Lemma}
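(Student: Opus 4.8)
The plan is to prove both identities by the same mechanism: rewrite each left-hand side as a telescoping sum and collapse the consecutive differences using Pascal's rule $\binom{m}{k}=\binom{m-1}{k}+\binom{m-1}{k-1}$, equivalently $\binom{m}{k}-\binom{m-1}{k}=\binom{m-1}{k-1}$. An induction on $s$ would serve equally well, but the telescoping viewpoint makes the index bookkeeping transparent, and everything holds with the standard convention that $\binom{m}{j}=0$ when $j<0$ or $j>m$, so no size restrictions among $n,k,s$ are needed.

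For Part $(1)$ I would write
\[
\binom{n}{k}-\binom{n-s}{k}=\sum_{i=0}^{s-1}\left(\binom{n-i}{k}-\binom{n-i-1}{k}\right),
\]
where the right-hand side telescopes back to the left. Applying Pascal's rule to each summand gives $\binom{n-i}{k}-\binom{n-i-1}{k}=\binom{n-i-1}{k-1}$, and the reindexing $j=i+1$ then turns the sum into $\sum_{j=1}^{s}\binom{n-j}{k-1}$, which is exactly the claimed form.

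For Part $(2)$ the only extra care is that both arguments of the binomial coefficient decrease together along the telescope. I would write
\[
\binom{n}{k}-\binom{n-s}{k-s}=\sum_{i=0}^{s-1}\left(\binom{n-i}{k-i}-\binom{n-i-1}{k-i-1}\right),
\]
which again telescopes to the left-hand side. Here Pascal's rule in the form $\binom{n-i}{k-i}-\binom{n-i-1}{k-i-1}=\binom{n-i-1}{k-i}$ collapses each term directly into the claimed summand $\binom{n-i-1}{k-i}$, with no reindexing required.

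Both identities also admit a one-line combinatorial reading, which I would mention as a sanity check: the left-hand side of Part $(1)$ counts the $k$-subsets of $[n]$ meeting $[s]$, sorted by their least element lying in $[s]$, while the left-hand side of Part $(2)$ counts the $k$-subsets of $[n]$ that fail to contain $[s]$, sorted by the least element of $[s]$ that they omit. There is no genuine obstacle here, as these are routine binomial identities; the only point demanding attention is keeping the jointly shifting indices of Part $(2)$ aligned with the stated range $i=0,\dots,s-1$.
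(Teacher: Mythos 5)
Your proof is correct and is essentially the paper's own argument: the paper also proves both identities by repeatedly applying Pascal's rule $\binom{a}{b}=\binom{a-1}{b-1}+\binom{a-1}{b}$, absorbing the summands one at a time into the binomial coefficient, which is exactly your telescoping sum read in the opposite direction. Nothing further is needed.
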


\begin{proof}
Apply $\binom{a}{b}=\binom{a-1}{b-1}+\binom{a-1}{b}$ repeatedly. Then we have
$$\binom{n-s}{k}+\sum_{i=1}^{s}\binom{n-i}{k-1}=\binom{n-s+1}{k}+\sum_{i=1}^{s-1}\binom{n-i}{k-1}=\cdots=\binom{n-1}{k}+\binom{n-1}{k-1}=\binom{n}{k}$$
and
\begin{align*}
\binom{n-s}{k-s}+\sum_{i=0}^{s-1}\binom{n-i-1}{k-i}&=\binom{n-s+1}{k-s+1}+\sum_{i=0}^{s-2}\binom{n-i-1}{k-i}
=\cdots=\binom{n}{k}.
\end{align*}
\end{proof}

\begin{Lemma}\label{lem:Comparative_size_4}
Let $r\geq 2$ and $n,k_1,k_2,\dots,k_r$ be positive integers such that $n\geq k_i+k_j$ for any $1\leq i<j\leq r$. Let $\bar{k}=\min\{k_i:i\in[r]\setminus\{2\}\}$.
If $k_1>k_2$ and $\bar{k}>1$, then $$\sum_{i=1}^{r}\binom{n-1}{k_i-1}\geq\binom{n}{k_2}-\binom{n-\bar{k}}{k_2}+\sum_{i\in[r]\setminus\{2\}}
\binom{n-\bar{k}}{k_i-\bar{k}}$$
with equality if and only if $r=2$ and $n=k_1+k_2$.
\end{Lemma}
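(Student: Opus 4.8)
The plan is to read the two sides as sizes of two concrete cross-intersecting systems: the left-hand side is the total size of the ``star at the point $1$'' system $\mathcal F_i=\{F\in\binom{[n]}{k_i}:1\in F\}$, while the right-hand side is the total size of the system $\mathcal F_2=\mathcal R_{[\bar k]}^{(k_2)}$, $\mathcal F_i=\mathcal P_{[\bar k]}^{(k_i)}$ $(i\neq2)$. Accordingly I would single out the indices $1$ and $2$ and write, with $R$ denoting the right-hand side,
\begin{equation*}
\sum_{i=1}^{r}\binom{n-1}{k_i-1}-R=A+B,
\end{equation*}
where
\begin{equation*}
A=\binom{n-1}{k_1-1}+\binom{n-1}{k_2-1}-\Big(\binom{n}{k_2}-\binom{n-\bar{k}}{k_2}\Big)-\binom{n-\bar{k}}{k_1-\bar{k}},\qquad B=\sum_{i\in[r]\setminus\{1,2\}}\Big(\binom{n-1}{k_i-1}-\binom{n-\bar{k}}{k_i-\bar{k}}\Big).
\end{equation*}
This rearrangement is immediate once $\binom{n}{k_2}-\binom{n-\bar{k}}{k_2}$ is recognised as $|\mathcal R_{[\bar{k}]}^{(k_2)}|$ and $\binom{n-\bar{k}}{k_i-\bar{k}}$ as $|\mathcal P_{[\bar{k}]}^{(k_i)}|$.

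I would then bound the two pieces separately. For $B$, each summand is $|\{F\in\binom{[n]}{k_i}:1\in F\}|-|\mathcal P_{[\bar{k}]}^{(k_i)}|\geq0$, since $\mathcal P_{[\bar{k}]}^{(k_i)}\subseteq\{F:1\in F\}$ (as $1\in[\bar{k}]$); hence $B\geq0$. For $A$, observe that $A$ is exactly the gap between the star bound $\binom{n-1}{k_1-1}+\binom{n-1}{k_2-1}$ and $|\mathcal P_{[\bar{k}]}^{(k_1)}|+|\mathcal R_{[\bar{k}]}^{(k_2)}|$. Now $\mathcal P_{[\bar{k}]}^{(k_1)}\subseteq\binom{[n]}{k_1}$ and $\mathcal R_{[\bar{k}]}^{(k_2)}\subseteq\binom{[n]}{k_2}$ are non-empty cross-intersecting, and $|\mathcal R_{[\bar{k}]}^{(k_2)}|=\sum_{i=1}^{\bar{k}}\binom{n-i}{k_2-1}\geq\binom{n-1}{k_2-1}$ by Lemma~\ref{lem:binomial}(1); since $k_1>k_2$ and $n\geq k_1+k_2$, Theorem~\ref{thm:nonempty_2_nonuniform}(2) gives $|\mathcal P_{[\bar{k}]}^{(k_1)}|+|\mathcal R_{[\bar{k}]}^{(k_2)}|\leq\binom{n-1}{k_1-1}+\binom{n-1}{k_2-1}$, i.e.\ $A\geq0$. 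Adding the two bounds yields the inequality.

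I expect $A\geq0$ to be the main obstacle. A direct computational attack reduces $A\geq0$, via Lemma~\ref{lem:binomial}, to $\sum_{i=1}^{\bar{k}-1}\binom{n-1-i}{k_1-i}\geq\sum_{i=1}^{\bar{k}-1}\binom{n-1-i}{k_2-1}$, but this cannot be proved term by term: the individual differences are positive for small $i$ and negative for large $i$, and only their total is nonnegative. This is exactly the phenomenon that the Kruskal--Katona theorem controls, which is why I would route $A\geq0$ through Theorem~\ref{thm:nonempty_2_nonuniform} rather than attempt an elementary injection.

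For the equality characterisation I would argue as follows. If $r>2$ then $[r]\setminus\{1,2\}\neq\emptyset$, and for each such $i$ the summand in $B$ equals $\sum_{j=2}^{\bar{k}}\binom{n-j}{k_i-j+1}$ by Lemma~\ref{lem:binomial}(2), which is at least its $j=2$ term $\binom{n-2}{k_i-1}>0$ because $\bar{k}>1$; hence $B>0$ and the inequality is strict. If $r=2$ then $B$ is an empty sum, $\bar{k}=k_1$, and equality holds iff $A=0$. Substituting $n=k_1+k_2$ into $A$ and applying Pascal's rule shows $A=0$ there, giving sufficiency; for necessity I would check that $A$, as a function of $n$, is strictly increasing for $n\geq k_1+k_2$ via a short first-difference computation (using Lemma~\ref{lem:binomial} and $k_1=\bar{k}>1$), so that $A>0$ whenever $n>k_1+k_2$. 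The only genuinely fiddly point is this last monotonicity/strictness check in the two-family case; once the decomposition $A+B$ is in hand, everything else is bookkeeping.
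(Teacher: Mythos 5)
Your proposal is correct, and while it uses the same decomposition as the paper --- the paper's proof for $r\geq3$ likewise writes the difference as your $A$ plus $B=\sum_{i=3}^{r}\bigl(\binom{n-1}{k_i-1}-\binom{n-\bar{k}}{k_i-\bar{k}}\bigr)$ --- the engine you use for $A\geq0$ is genuinely different. The paper never invokes Kruskal--Katona here: it splits into $k_2\geq k_1-\bar{k}$, where the wholesale comparisons $\binom{n-\bar{k}}{k_2}\geq\binom{n-\bar{k}}{k_1-\bar{k}}$ and $\binom{n-1}{k_1-1}\geq\binom{n-1}{k_2}$ suffice, and $k_2<k_1-\bar{k}$, where (contrary to the general situation you correctly flag) the term-by-term comparison $\binom{n-i-1}{k_1-i}\geq\binom{n-i-1}{k_2-1}$ \emph{does} hold for all $1\leq i\leq\bar{k}-1$ since then $k_2-1<k_1-i$; so your observation that a single termwise injection fails is right, but the paper shows a case split circumvents it elementarily, whereas you route through Theorem~\ref{thm:nonempty_2_nonuniform}(2). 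Your application of that theorem is legitimate (it is an external cited result, so no circularity; $\mathcal P_{[\bar{k}]}^{(k_1)}$ is non-empty since $\bar{k}\leq k_1$, the families are cross-intersecting, $|\mathcal R_{[\bar{k}]}^{(k_2)}|\geq\binom{n-1}{k_2-1}$ by Lemma~\ref{lem:binomial}(1), and $k_1>k_2$ puts you in the ``otherwise'' branch), and it buys brevity at the cost of importing a Kruskal--Katona-strength black box. For the $r=2$ equality case the paper argues directly from $A=\binom{n-1}{k_1-1}-\binom{n-1}{k_2}+\binom{n-k_1}{k_2}-1$, with each of the two inequalities' equality conditions read off immediately; your alternative via strict monotonicity in $n$ also checks out, since the first difference equals $\binom{n-1}{k_1-2}-\binom{n-1}{k_2-1}+\binom{n-k_1}{k_2-1}$, where the first two terms compare nonnegatively (as $k_1-2\geq k_2-1$ and $(k_1-2)+(k_2-1)<n-1$) and the last is strictly positive for $n\geq k_1+k_2$ --- slightly more work than the paper's direct reading, but it correctly yields both sufficiency and necessity; your $B>0$ argument for $r>2$ (the $j=2$ term $\binom{n-2}{k_i-1}>0$) matches what the paper leaves implicit.
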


\begin{proof}
If $r=2$, then $\bar{k}=k_1$ and $\binom{n-1}{k_1-1}+\binom{n-1}{k_2-1}-(\binom{n}{k_2}-\binom{n-k_1}{k_2}+1)=
\binom{n-1}{k_1-1}-\binom{n-1}{k_2}+\binom{n-k_1}{k_2}-1$.
Since $n\geq k_1+k_2$ and $k_1>k_2$, $\binom{n-1}{k_1-1}\geq\binom{n-1}{k_2}$ with equality if and only if $k_1=k_2+1$ or $n=k_1+k_2$. Since $n\geq k_1+k_2$ and $k_2>0$, $\binom{n-k_1}{k_2}-1\geq 0$ with equality if and only if $n=k_1+k_2$. Then the desired conclusion is obtained.

Assume that $r\geq 3$. We have
\begin{align*}
&\sum_{i=1}^{r}\binom{n-1}{k_i-1}-\left(\binom{n}{k_2}-\binom{n-\bar{k}}{k_2}+\binom{n-\bar{k}}{k_1-\bar{k}}+\sum_{i=3}^{r}
\binom{n-\bar{k}}{k_i-\bar{k}}\right)\\
&=\binom{n-1}{k_1-1}-\binom{n-\bar{k}}{k_1-\bar{k}}-\binom{n-1}{k_2}+\binom{n-\bar{k}}{k_2}+\sum_{i=3}^{r}\left(\binom{n-1}{k_i-1}-
\binom{n-\bar{k}}{k_i-\bar{k}}\right)\\
&=\binom{n-1}{k_1-1}-\binom{n-\bar{k}}{k_1-\bar{k}}-\binom{n-1}{k_2}+\binom{n-\bar{k}}{k_2}+\sum_{i=3}^{r}\sum_{j=0}^{\bar{k}-2}
\binom{n-j-2}{k_i-1-j},
\end{align*}
where the last equality follows from Lemma \ref{lem:binomial} $(2)$. If $k_2\geq k_1-\bar{k}$, since $n\geq k_1+k_2$ and $k_1>k_2$, we have $\binom{n-\bar{k}}{k_2}\geq \binom{n-\bar{k}}{k_1-\bar{k}}$ and $\binom{n-1}{k_1-1}\geq\binom{n-1}{k_2}$, which yield the desired inequality. If $k_2<k_1-\bar{k}$, it follows from Lemma \ref{lem:binomial} that $$\binom{n-1}{k_1-1}-\binom{n-\bar{k}}{k_1-\bar{k}}-\binom{n-1}{k_2}+\binom{n-\bar{k}}{k_2}=\sum_{i=1}^{\bar{k}-1}\left(\binom{n-i-1}{k_1-i}
-\binom{n-i-1}{k_2-1}\right).$$
Since $k_2<k_1-\bar{k}$, $k_2-1<k_1-i$ for every $1\leq i\leq \bar{k}-1$, and so $\binom{n-i-1}{k_1-i}\geq\binom{n-i-1}{k_2-1}$. This completes the proof.
\end{proof}

\section{Proof of Theorem \ref{thm:nonempty_k1_k2_kr_other}}

For non-empty cross-intersecting families $\mathcal F_i\subseteq\binom{[n]}{k_i}$, $i\in[r]$, to determine the largest possible value of $\sum_{i=1}^{r}|\mathcal F_i|$, by Theorem \ref{thm:Hilton_theorem}, without loss of generality, one can assume that $\mathcal F_i\subseteq\binom{[n]}{k_i}$, $i\in[r]$, are all $L$-initial.

\begin{Lemma}\label{lem:n>k_j+h_intinal}
Let $r\geq 2$ and $\mathcal F_1\subseteq\binom{[n]}{k_1},\mathcal F_2\subseteq\binom{[n]}{k_2},\dots,\mathcal F_r\subseteq\binom{[n]}{k_r}$ be non-empty $L$-initial cross-intersecting families with $|\mathcal F_{i^*}|\geq\binom{n-1}{k_{i^*}-1}$ for some $i^*\in [r]$. Let $\bar{k}=\min\{k_i:i\in[r]\setminus\{i^*\}\}$. If $n>\bar{k}+k_{i^*}$ and $n\geq k_i+k_{i^*}$ for every $i\in[r]\setminus\{i^*\}$, then
\begin{align}\label{eqn:1}
\sum_{i=1}^{r}|\mathcal F_i|\leq\max\left\{\binom{n}{k_{i^*}}-\binom{n-\bar{k}}{k_{i^*}}+\sum_{i\in[r]\setminus\{i^*\}}
\binom{n-\bar{k}}{k_i-\bar{k}},\
\sum_{i=1}^{r}\binom{n-1}{k_i-1}\right\}
\end{align}
with equality if and only if $\mathcal F_{i^*}=\mathcal R_{s}^{(k_{i^*})}$ and $\mathcal F_i=\mathcal P_{s}^{(k_i)}$ for $i\in[r]\setminus\{i^*\}$, where $s=1$ or $\bar{k}$.
\end{Lemma}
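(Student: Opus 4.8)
The plan is to exploit $L$-initiality to pin $\mathcal F_{i^*}$ between two explicit families and then reduce the whole problem to the one-parameter estimate already isolated in Lemma \ref{lem:Comparative_size_3}. First I would record the two containments that frame $\mathcal F_{i^*}$. Since $\mathcal F_{i^*}$ is $L$-initial and $|\mathcal F_{i^*}|\geq\binom{n-1}{k_{i^*}-1}=|\mathcal R_1^{(k_{i^*})}|$, and since $\mathcal R_1^{(k_{i^*})}$ is precisely the initial segment of that size, we get $\mathcal R_1^{(k_{i^*})}\subseteq\mathcal F_{i^*}$. On the other side, pick $j\in[r]\setminus\{i^*\}$ with $k_j=\bar k$; because $\mathcal F_j$ is nonempty and $L$-initial it contains the lex-smallest set $[\bar k]$, and cross-intersection with $\mathcal F_{i^*}$ forces every member of $\mathcal F_{i^*}$ to meet $[\bar k]$, i.e. $\mathcal F_{i^*}\subseteq\mathcal R_{\bar k}^{(k_{i^*})}$. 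Thus $\mathcal R_1^{(k_{i^*})}\subseteq\mathcal F_{i^*}\subseteq\mathcal R_{\bar k}^{(k_{i^*})}$, so in particular $\binom{n-1}{k_{i^*}-1}\leq|\mathcal F_{i^*}|\leq\binom{n}{k_{i^*}}-\binom{n-\bar k}{k_{i^*}}$.

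Next, for the upper bound it suffices to control each $\mathcal F_i$ ($i\neq i^*$) through its cross-intersection with $\mathcal F_{i^*}$ alone, discarding the mutual conditions among the non-$i^*$ families. So for a fixed $L$-initial $\mathcal F_{i^*}$ I would bound $|\mathcal F_i|$ by the size of the largest $L$-initial family in $\binom{[n]}{k_i}$ cross-intersecting with $\mathcal F_{i^*}$, a relaxation justified by Theorem \ref{thm:Hilton_theorem}; this turns $\sum_{i=1}^r|\mathcal F_i|$ into a quantity depending only on the initial segment $\mathcal F_{i^*}$. When $\mathcal F_{i^*}=\mathcal R_s^{(k_{i^*})}$, Lemma \ref{lem:nonempty_2_uniform_1_inter} (whose hypothesis $n\geq k_i+k_{i^*}$ we have) identifies these largest families as $\mathcal P_s^{(k_i)}$, so the total is exactly $g(s)=\binom{n}{k_{i^*}}-\binom{n-s}{k_{i^*}}+\sum_{i\ne i^*}\binom{n-s}{k_i-s}$ from Lemma \ref{lem:Comparative_size_3}.

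The heart of the argument, and the step I expect to be hardest, is to show that the total is always maximized at a \emph{clean} threshold $\mathcal F_{i^*}=\mathcal R_s^{(k_{i^*})}$ with integer $1\leq s\leq\bar k$, rather than at an $L$-initial family whose size lies strictly between $|\mathcal R_s^{(k_{i^*})}|$ and $|\mathcal R_{s+1}^{(k_{i^*})}|$. Here I would analyze a single block: the members of $\mathcal R_{s+1}^{(k_{i^*})}\setminus\mathcal R_s^{(k_{i^*})}$ are exactly the sets $\{s+1\}\cup T$ with $T\subseteq\{s+2,\dots,n\}$, and a candidate $B$ is cross-intersecting with such an $\mathcal F_{i^*}$ iff $[s]\subseteq B$ and $B\setminus[s]$ meets $\{s+1\}\cup T$ for every admitted block set. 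This exhibits the within-block behaviour as a smaller cross-intersecting problem on the shifted ground set $\{s+1,\dots,n\}$, and the task is to prove that, as $\mathcal F_{i^*}$ grows across the block, the total viewed as a function of $|\mathcal F_{i^*}|$ attains its maximum at one of the two endpoints $\mathcal R_s^{(k_{i^*})},\mathcal R_{s+1}^{(k_{i^*})}$ (a convexity-type claim ruling out an interior maximum). Granting this, every configuration is dominated by some $g(s)$ with integer $s$.

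Finally I would invoke Lemma \ref{lem:Comparative_size_3}: under $n>\bar k+k_{i^*}$ and $n\geq k_i+k_{i^*}$ for all $i\neq i^*$, the maximum of $g(s)$ over $1\leq s\leq\bar k$ is attained at $s=1$ or $s=\bar k$. Using Lemma \ref{lem:binomial}(1) one checks that $g(1)=\sum_{i=1}^r\binom{n-1}{k_i-1}$ while $g(\bar k)$ is the other expression in the stated maximum, which yields \eqref{eqn:1}. For the equality characterization I would trace the chain backwards: equality forces each $\mathcal F_i$ to be the maximal family cross-intersecting with $\mathcal F_{i^*}$, forces $\mathcal F_{i^*}$ to sit at a block-endpoint so that $\mathcal F_{i^*}=\mathcal R_s^{(k_{i^*})}$, and forces $s\in\{1,\bar k\}$ through the strict inequalities in Lemma \ref{lem:Comparative_size_3}; Lemma \ref{lem:nonempty_2_uniform_1_inter} then identifies $\mathcal F_i=\mathcal P_s^{(k_i)}$, giving the asserted extremal families.
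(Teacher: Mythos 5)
Your overall architecture is the same as the paper's: sandwich $\mathcal R_1^{(k_{i^*})}\subseteq\mathcal F_{i^*}\subseteq\mathcal R_{\bar k}^{(k_{i^*})}$, locate $\mathcal F_{i^*}$ in a block between consecutive thresholds $\mathcal R_s^{(k_{i^*})}$ and $\mathcal R_{s+1}^{(k_{i^*})}$, reduce the within-block behaviour to a smaller disjointness problem on $[s+1,n]$, and finish with Lemma \ref{lem:Comparative_size_3}. (Your relaxation dropping the mutual conditions among the non-$i^*$ families is sound, since every family cross-intersecting with $\mathcal F_{i^*}\supseteq\mathcal R_s^{(k_{i^*})}$ lies in $\mathcal P_s^{(k_i)}$ by Lemma \ref{lem:nonempty_2_uniform_1_inter}, so those families pairwise intersect automatically.) But there is a genuine gap at precisely the step you yourself flag as the heart: the claim that the total is maximized at a block endpoint is only asserted (``Granting this\dots''), and it is the entire technical content of the lemma. ``Convexity'' is not self-evident here and is not what the paper proves; instead the paper converts within-block configurations into independent sets of an auxiliary $r$-partite graph with parts $\mathcal X_{i^*}=\binom{[s+2,n]}{k_{i^*}-1}$ and $\mathcal X_i=\binom{[s+2,n]}{k_i-s}$, and proves the expansion inequality $|N_{\mathcal X_i}(\mathcal Q)|\geq |\mathcal X_i||\mathcal Q|/|\mathcal X_{i^*}|$ by a biregularity edge-count (Claim \ref{clm:bipartite_neighbour}). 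This makes the upper bound on the independent set \emph{linear} in $|\mathcal I_{i^*}|$, hence endpoint-dominated. Some counting argument of this kind must be supplied; without it your proposal is a plan, not a proof.

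A second, related omission concerns the equality characterization, where you assume strict loss at interior points. Strictness in the expansion inequality requires knowing when it is tight, which the paper derives from connectivity of the bipartite disjointness graph between $\mathcal X_{i^*}$ and $\mathcal X_i$ — valid only when $n>k_i+k_{i^*}$. Your hypotheses allow $n=k_i+k_{i^*}$ for non-minimal indices $i\neq i^*$ (only $n>\bar k+k_{i^*}$ is strict), and for such $i$ that graph is a perfect matching: disconnected, with $|N_{\mathcal X_i}(\mathcal Q)|=|\mathcal Q|$ for \emph{every} $\mathcal Q$, so no interior strictness is available from that pair. The paper therefore splits into $H_1=\{i:n>k_i+k_{i^*}\}$ and $H_2=\{i:n=k_i+k_{i^*}\}$ and shows in the second case that equality forces $\mathcal I_{i^*}=\emptyset$ (only one endpoint survives), using $H_1\neq\emptyset$, which follows from $n>\bar k+k_{i^*}$. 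Your sketch never engages this case, so even granting an endpoint lemma for the generic case, the uniqueness statement — equality iff $\mathcal F_{i^*}=\mathcal R_s^{(k_{i^*})}$ and $\mathcal F_i=\mathcal P_s^{(k_i)}$ with $s\in\{1,\bar k\}$ — would not be established under the stated hypotheses.
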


\begin{proof}
Without loss of generality, suppose that $\mathcal F_1,\mathcal F_2,\dots,\mathcal F_r$ are maximal cross-intersecting families.

If $k_{i^*}=1$, write $|\mathcal F_{i^*}|=s\geq1$. Since $\mathcal F_{i^*}$ is $L$-initial, $\mathcal F_{i^*}=\{\{1\},\dots,\{s\}\}=\mathcal R_{s}^{(1)}.$ For any $i\in[r]\setminus\{{i^*}\}$, $\mathcal F_i$ is cross-intersecting with $\mathcal F_{i^*}$, so each element of $\mathcal F_i$ contains $[s]$, i.e., $\mathcal F_i\subseteq \mathcal P_{s}^{(k_i)}$. It follows from the maximality of $\mathcal F_i$ that $\mathcal F_i=\mathcal P_{s}^{(k_i)}$ for every $i\neq i^*$. Note that $s\leq k_i$ for any $i\neq i^*$ and so $s\leq\bar{k}$. Therefore, there is $1\leq s\leq\bar{k}$ such that $\mathcal F_{i^*}=\mathcal R_{s}^{(k_{i^*})}$ and $\mathcal F_i=\mathcal P_{s}^{(k_i)}$ for all $i\in[r]\setminus\{{i^*}\}$.
Apply Lemma \ref{lem:Comparative_size_3} to obtain the upper bound \eqref{eqn:1} for $\sum_{i=1}^{r}|\mathcal F_i|$ and the structure of extremal families.

Assume that $k_{i^*}\geq 2$. Since $\mathcal F_i$ is $L$-initial and nonempty for $i\in[r]\setminus\{{i^*}\}$, we have $[k_i]\in\mathcal F_i$, and so each element of $\mathcal F_{i^*}$ intersects with $[k_i]$ and $\mathcal F_{i^*}\subseteq \mathcal R_{k_i}^{(k_{i^*})}$ for every $i\neq i^*$. Since $\bar{k}=\min\{k_i:i\in[r]\setminus\{{i^*}\}\}$, $\mathcal F_{i^*}\subseteq \mathcal R_{\bar{k}}^{(k_{i^*})}$. By the assumption of $|\mathcal F_{i^*}|\geq\binom{n-1}{k_{i^*}-1}$, $\mathcal R_{1}^{(k_{i^*})}\subseteq\mathcal F_{i^*}$. So $\mathcal R_{1}^{(k_{i^*})}\subseteq\mathcal F_{i^*}\subseteq \mathcal R_{\bar{k}}^{(k_{i^*})}$.

If $\bar{k}=1$, then $\mathcal F_{i^*}=\mathcal R_{1}^{(k_{i^*})}.$
It follows from Lemma \ref{lem:nonempty_2_uniform_1_inter} that $\mathcal F_i\subseteq\mathcal P_1^{(k_i)}$ for $i\neq i^*$. By the maximality of $\mathcal F_i$, $\mathcal F_i=\mathcal P_{1}^{(k_i)}$ for all $i\in[r]\setminus\{i^*\}$, and consequently, $\sum_{i=1}^{r}|\mathcal F_i|=\sum_{i=1}^{r}\binom{n-1}{k_i-1}.$

Assume that $\bar{k}\geq 2$.
Define $s$ to be the largest integer such that $2\leq s+1\leq\bar{k}$ and
$$\mathcal R_{s}^{(k_{i^*})}\subseteq\mathcal F_{i^*}\subseteq\mathcal R_{s+1}^{(k_{i^*})}.$$
$\mathcal R_{s}^{(k_{i^*})}\subseteq\mathcal F_{i^*}$ implies that $\mathcal F_i$ for $i\neq i^*$ is cross-intersecting with $\mathcal R_{s}^{(k_{i^*})}$.
It follows from Lemma \ref{lem:nonempty_2_uniform_1_inter} that $\mathcal F_i\subseteq\mathcal P_{s}^{(k_i)}$ for $i\neq i^*$.
$\mathcal F_{i^*}\subseteq\mathcal R_{s+1}^{(k_{i^*})}$ implies that $\mathcal P_{s+1}^{(k_i)}$ is cross-intersecting with $\mathcal F_{i^*}$ for any $i\neq i^*$.
Since $\mathcal P_{s+1}^{(k_{i_1})}$ is cross-intersecting with $\mathcal F_{i_2}\subseteq\mathcal P_{s}^{(k_{i_2})}$ for any $i_1\neq i_2$ and $i_1,i_2\in[r]\setminus\{{i^*}\}$, $\mathcal P_{s+1}^{(k_i)}\subseteq\mathcal F_i$ for $i\in[r]\setminus\{{i^*}\}$ by the maximality of $\mathcal F_i$.
Therefore, for any $i\in[r]\setminus\{{i^*}\}$, $$\mathcal P_{s+1}^{(k_i)}\subseteq\mathcal F_i\subseteq\mathcal P_{s}^{(k_i)}.$$
Let
$\mathcal G_{i^*}=\mathcal F_{i^*}\setminus\mathcal R_{s}^{(k_{i^*})}$ and $\mathcal G_i=\mathcal F_i\setminus\mathcal P_{s+1}^{(k_i)}$ for  $i\in[r]\setminus\{{i^*}\}$.
Then
$$\sum_{i=1}^{r}|\mathcal F_i|=|\mathcal R_{s}^{(k_{i^*})}|+\sum_{i\in[r]\setminus\{{i^*}\}}|\mathcal P_{s+1}^{(k_i)}|+\sum_{i=1}^{r}|\mathcal G_i|.$$
Our goal is to maximize $\sum_{i=1}^{r}|\mathcal G_i|$.

Since $\mathcal G_{i^*}\subseteq\mathcal R_{s+1}^{(k_{i^*})}\setminus\mathcal R_{s}^{(k_{i^*})}=\{R\in\binom{[n]}{k_{i^*}}:R\cap[s+1]=\{s+1\}\}=\{T\cup\{s+1\}:T\in \binom{[s+2,n]}{k_{i^*}-1}\}$ and for $i\in[r]\setminus\{{i^*}\}$, $\mathcal G_i\subseteq\mathcal P_{s}^{(k_i)}\setminus\mathcal P_{s+1}^{(k_i)}=\{P\in\binom{[n]}{k_i}:P\cap[s+1]=[s]\}=\{T\cup[s]:T\in\binom{[s+2,n]}{k_i-s}\}$, we construct an auxiliary $r$-partite graph $G=(\mathcal X_1,\mathcal X_2,\dots,\mathcal X_r,E(G))$ where $\mathcal X_{i^*}=\binom{[s+2,n]}{k_{i^*}-1}$ and $\mathcal X_i=\binom{[s+2,n]}{k_i-s}$ for $i\in[r]\setminus\{{i^*}\}$ such that for $X_{i^*}\in \mathcal X_{i^*}$ and $X_i\in \mathcal X_i$ with $i\neq i^*$, $X_{i^*}X_i$ is an edge if and only if $X_{i^*}\cap X_i=\emptyset$, and there is no edge between $\mathcal X_{i_1}$ and $\mathcal X_{i_2}$ for $i_1,i_2\in[r]\setminus\{{i^*}\}$.
Clearly $G$ can be also regarded as a bipartite graph with parts $\mathcal X_{i^*}$ and $V(G)\setminus\mathcal X_{i^*}$. Note that the vertex set of $G$ is a slight abuse of notation, since it is possible that $\mathcal X_{i_1}=\mathcal X_{i_2}$ for some $1\leq i_1<i_2\leq r$.
Let $\mathcal I_{i^*}=\{R\setminus\{s+1\}:R\in\mathcal G_{i^*}\}$ and $\mathcal I_i=\{P\setminus[s]:P\in\mathcal G_i\}$ for $i\in[r]\setminus\{{i^*}\}$. Then $\mathcal I_i\subseteq \mathcal X_i$ for $i\in[r]$ and $\mathcal I=\bigcup_{i=1}^{r}\mathcal I_i$ is an independent set of $G$.
On the other hand, if $\mathcal I'$ is an independent set of $G$ and $\mathcal I'_i=\mathcal I'\cap \mathcal X_i$ for $i\in[r]$, then $\mathcal G'_1,\dots,\mathcal G'_r$ are cross-intersecting, where $\mathcal G'_{i^*}=\{\{s+1\}\cup X_{i^*}:X_{i^*}\in\mathcal I'_{i^*}\}$ and $\mathcal G'_i=\{[s]\cup X_i:X_i\in\mathcal I'_i\}$ for $i\in[r]\setminus\{{i^*}\}$. Therefore, to maximize $\sum_{i=1}^{r}|\mathcal G_i|$, it suffices to examine the largest independent set of $G$.

\begin{Claim}\label{clm:bipartite_neighbour}
For any $\mathcal Q\subseteq \mathcal X_{i^*}$ and any $i\in[r]\setminus\{{i^*}\}$, $|N_{\mathcal X_i}(\mathcal Q)|\geq\frac{|\mathcal X_i||\mathcal Q|}{|\mathcal X_{i^*}|}$, where $N_{\mathcal X_i}(\mathcal Q)=\{X_i\in \mathcal X_i: \text{there is } X_{i^*}\in \mathcal Q\text{ such that }X_{i^*}X_i\text{ is an edge}\}$.
Moreover, if $n>k_i+k_{i^*}$, then the equality holds if and only if $\mathcal Q=\emptyset$ or $\mathcal Q=\mathcal X_{i^*}$.
\end{Claim}

\begin{proof}[{\bf Proof of Claim \ref{clm:bipartite_neighbour}}]
By the definition of the graph $G$, for any $i\in[r]\setminus\{{i^*}\}$, the vertices in the same partite set have the same degree in the induced bipartite subgraph $G[\mathcal X_{i^*},\mathcal X_i]$.
Let $d_{i^*}$ and $d_i$ denote the degree of the vertex in $\mathcal X_{i^*}$ and $\mathcal X_i$, respectively.
So $e(G[\mathcal X_{i^*},\mathcal X_i])=d_{i^*}|\mathcal X_{i^*}|=d_i|\mathcal X_i|$, where $e(G[\mathcal X_{i^*},\mathcal X_i])$ is the number of edges in $G[\mathcal X_{i^*},\mathcal X_i]$.
On the other hand, $d_{i^*}|\mathcal Q|\leq d_i|N_{\mathcal X_i}(\mathcal Q)|$. So $|N_{\mathcal X_i}(\mathcal Q)|\geq\frac{|\mathcal X_i||\mathcal Q|}{|\mathcal X_{i^*}|}$.
When $n>k_i+k_{i^*}$, the induced bipartite subgraph $G[\mathcal X_{i^*},\mathcal X_i]$ is connected.
Hence, the equality holds if and only if $\mathcal Q=\emptyset$ or $\mathcal Q=\mathcal X_{i^*}$.
\end{proof}

Since $\mathcal I_i\cap N_{\mathcal X_i}(\mathcal I_{i^*})=\emptyset$ for $i\neq i^*$, applying Claim \ref{clm:bipartite_neighbour}, we have
$$\frac{|\mathcal I_{i^*}|\binom{n-s-1}{k_i-s}}{\binom{n-s-1}{k_{i^*}-1}}+|\mathcal I_i|\leq|N_{\mathcal X_i}(\mathcal I_{i^*})|+|\mathcal I_i|\leq|\mathcal X_i|=\binom{n-s-1}{k_i-s}.$$
Hence, for $i\neq i^*$,
$$|\mathcal I_i|\leq\binom{n-s-1}{k_i-s}\left(1-\frac{|\mathcal I_{i^*}|}{\binom{n-s-1}{k_{i^*}-1}}\right),$$
and when $n>k_i +k_{i^*}$, the equality holds if and only if $\mathcal I_{i^*}=\emptyset$ or $\mathcal X_{i^*}$.

Let $H_1=\{i\in[r]:n>k_i+k_{i^*},i\neq {i^*}\}$ and $H_2=\{i\in[r]:n=k_i+k_{i^*},i\neq {i^*}\}$. Note that $H_1\neq\emptyset$ because of $n>\bar{k}+k_{i^*}$.

\noindent{\bf \underline{Case 1 $H_2=\emptyset$.}}
We claim that $\mathcal I_{i^*}$ is either $\emptyset$ or $\mathcal X_{i^*}$ when $\mathcal I$ is the largest independent set of $G$.
If $\mathcal I_{i^*}=\emptyset$, then $\mathcal I_i=\mathcal X_i$ for $i\in[r]\setminus\{{i^*}\}$ and so $$|\mathcal I|=\sum_{i\in[r]\setminus\{{i^*}\}}\binom{n-s-1}{k_i-s}.$$
If $\mathcal I_{i^*}=\mathcal X_{i^*}$, then $\mathcal I_i=\emptyset$ for $i\in[r]\setminus\{{i^*}\}$ and so $$|\mathcal I|=\binom{n-s-1}{k_{i^*}-1}.$$
If $\mathcal I_{i^*}\neq\emptyset$ and $\mathcal I_{i^*}\neq \mathcal X_{i^*}$, then
\begin{align*}
|\mathcal I|=\sum_{i=1}^{r}|\mathcal I_i|&<|\mathcal I_{i^*}|+\sum_{i\in[r]\setminus\{{i^*}\}}\binom{n-s-1}{k_i-s}\left(1-\frac{|\mathcal I_{i^*}|}{\binom{n-s-1}{k_{i^*}-1}}\right).
\end{align*}
If $\sum_{i\in[r]\setminus\{{i^*}\}}\binom{n-s-1}{k_i-s}\leq\binom{n-s-1}{k_{i^*}-1}$, then $|\mathcal I|<\binom{n-s-1}{k_{i^*}-1}$. If $\sum_{i\in[r]\setminus\{{i^*}\}}\binom{n-s-1}{k_i-s}>\binom{n-s-1}{k_{i^*}-1}$, then
\begin{align*}
|\mathcal I|<\sum_{i\in[r]\setminus\{{i^*}\}}\binom{n-s-1}{k_i-s}+|\mathcal I_{i^*}|\left(1-\frac{\sum_{i\in[r]\setminus\{{i^*}\}}\binom{n-s-1}{k_i-s}}{\binom{n-s-1}{k_{i^*}-1}}\right)
<\sum_{i\in[r]\setminus\{{i^*}\}}\binom{n-s-1}{k_i-s}.
\end{align*}
To sum up, if $\mathcal I_{i^*}\neq\emptyset$ and $\mathcal I_{i^*}\neq \mathcal X_{i^*}$, then $$|\mathcal I|<\max\left\{\binom{n-s-1}{k_{i^*}-1},\sum_{i\in[r]\setminus\{{i^*}\}}\binom{n-s-1}{k_i-s}\right\}.$$ Therefore, if $\mathcal I$ is the largest independent set of $G$, then $\mathcal I_{i^*}=\emptyset$ or $\mathcal X_{i^*}$, and so $\mathcal I=V(G)\setminus\mathcal X_{i^*}$ or $\mathcal X_{i^*}$, which yields that either $\mathcal G_{i^*}=\emptyset$ and $\mathcal G_i=\{[s]\cup X_i:X_i\in\binom{[s+2,n]}{k_i-s}\}$ for all $i\neq i^*$, or $\mathcal G_{i^*}=\{\{s+1\}\cup X_{i^*}:X_{i^*}\in\binom{[s+2,n]}{k_{i^*}-1}\}$ and $\mathcal G_i=\emptyset$ for all $i\neq i^*$.
That is, either $\mathcal F_{i^*}=\mathcal R_{s}^{(k_{i^*})}$ and $\mathcal F_i=\mathcal P_{s}^{(k_i)}$ for all $i\in[r]\setminus\{{i^*}\}$, or $\mathcal F_{i^*}=\mathcal R_{s+1}^{(k_{i^*})}$ and $\mathcal F_i=\mathcal P_{s+1}^{(k_i)}$ for all $i\in[r]\setminus\{{i^*}\}$.
Hence, there is $1\leq s\leq\bar{k}$ such that $\mathcal F_{i^*}=\mathcal R_{s}^{(k_{i^*})}$ and $\mathcal F_i=\mathcal P_{s}^{(k_i)}$ for all $i\in[r]\setminus\{{i^*}\}$.
By Lemma \ref{lem:Comparative_size_3}, we obtain the upper bound \eqref{eqn:1} for $\sum_{i=1}^{r}|\mathcal F_i|$ and the structure of extremal families.

\noindent{\bf \underline{Case 2 $H_2\neq\emptyset$.}} For any $i\in H_2$, the induced bipartite subgraph $G[\mathcal X_i,\mathcal X_{i^*}]$ is a perfect matching, and so $|\mathcal I_i|+|\mathcal I_{i^*}|=|\mathcal I_i|+|N_{\mathcal X_i}(\mathcal I_{i^*})|\leq|\mathcal X_i|=\binom{n-s-1}{k_i-s}$. Thus
\begin{align*}
|\mathcal I|=\sum_{i=1}^{r}|\mathcal I_i|&\leq \sum_{i\in H_1}\binom{n-s-1}{k_i-s}\left(1-\frac{|\mathcal I_{i^*}|}{\binom{n-s-1}{k_{i^*}-1}}\right)+\sum_{i\in H_2}|\mathcal I_i|+|\mathcal I_{i^*}|\\
&\leq\sum_{i\in H_1}\binom{n-s-1}{k_i-s}+\sum_{i\in H_2}\binom{n-s-1}{k_i-s}=\sum_{i\in[r]\setminus\{{i^*}\}}\binom{n-s-1}{k_i-s},
\end{align*}
with equality if and only if $\mathcal I_{i^*}=\emptyset$ and $\mathcal I_i=\mathcal X_i$ for $i\in[r]\setminus\{{i^*}\}$. Then $\mathcal G_{i^*}=\emptyset$ and $\mathcal G_i=\{[s]\cup X_i:X_i\in\binom{[s+2,n]}{k_i-s}\}$ for $i\in[r]\setminus\{{i^*}\}$, and hence $\mathcal F_{i^*}=\mathcal R_{s}^{(k_{i^*})}$ and $\mathcal F_i=\mathcal P_{s}^{(k_i)}$ for $i\in[r]\setminus\{{i^*}\}$.
Therefore, there is $1\leq s\leq\bar{k}$ such that $\mathcal F_{i^*}=\mathcal R_{s}^{(k_{i^*})}$ and $\mathcal F_i=\mathcal P_{s}^{(k_i)}$ for all $i\in[r]\setminus\{{i^*}\}$.
By Lemma \ref{lem:Comparative_size_3}, we obtain the upper bound \eqref{eqn:1} for $\sum_{i=1}^{r}|\mathcal F_i|$ and the structure of extremal families.
\end{proof}

Lemma \ref{lem:n>k_j+h_intinal} gives the structure of the largest non-empty $L$-initial cross-intersecting families. We will generalize Lemma \ref{lem:n>k_j+h_intinal} to general non-empty cross-intersecting families. To this end, we need a notation. For $\mathcal A\subseteq\binom{[n]}{k}$, let
$$\mathcal D_{l}(\mathcal A)=\{D\in\binom{[n]}{l}:\text{there exists }A\in\mathcal A\text{ such that }A\cap D=\emptyset\}.$$
Clearly $\mathcal A\subseteq\binom{[n]}{k}$ and $\mathcal B\subseteq\binom{[n]}{l}$ are cross-intersecting if and only if $\mathcal A\subseteq\binom{[n]}{k}\setminus\mathcal D_k(\mathcal B)$ or $\mathcal B\subseteq\binom{[n]}{l}\setminus\mathcal D_l(\mathcal A)$. The following lemma was implicitly given in \cite{FG,Mors} and was stated explicitly in \cite[Proposition 2.3]{SFQ}.

\begin{Lemma}\label{lem:non_intersecting}{\rm \cite{FG,Mors,SFQ}}
If $n>k+l$ and $\mathcal A\subseteq\binom{[n]}{k}$ with $|\mathcal A|=\binom{n-s}{k-s}$ for some $1\leq s\leq k$, then $|\mathcal D_{l}(\mathcal A)|\geq\binom{n-s}{l}$ with equality if and only if $\mathcal A=\mathcal P_{S}^{(k)}$ for some $S\in\binom{[n]}{s}$.
\end{Lemma}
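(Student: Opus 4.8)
The plan is to pass to the complementary family and reinterpret $|\mathcal{D}_l(\mathcal{A})|$ as a shadow, so that both the bound and its equality case become instances of the Kruskal--Katona theorem. Writing $\overline{\mathcal{A}}=\{[n]\setminus A:A\in\mathcal{A}\}\subseteq\binom{[n]}{n-k}$, the condition $A\cap D=\emptyset$ is equivalent to $D\subseteq[n]\setminus A$, so $D\in\mathcal{D}_l(\mathcal{A})$ holds exactly when $D$ is an $l$-subset of some member of $\overline{\mathcal{A}}$. Thus $\mathcal{D}_l(\mathcal{A})$ is precisely the $l$-th lower shadow of $\overline{\mathcal{A}}$, and $|\overline{\mathcal{A}}|=|\mathcal{A}|=\binom{n-s}{k-s}=\binom{n-s}{n-k}$. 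Since $n>k+l$ gives $l<n-k$ and $s\leq k$ gives $n-s\geq n-k$, applying Kruskal--Katona to the family $\overline{\mathcal{A}}$ of $(n-k)$-sets of cardinality $\binom{n-s}{n-k}$ yields $|\mathcal{D}_l(\mathcal{A})|\geq\binom{n-s}{l}$. I note that the bound alone can instead be obtained staying entirely within the excerpt: by Theorem \ref{thm:Hilton_theorem} one may replace $\mathcal{A}$ and its maximal cross-intersecting partner $\mathcal{B}=\binom{[n]}{l}\setminus\mathcal{D}_l(\mathcal{A})$ by their $L$-initial versions; then $\mathcal{A}_L=\mathcal{P}_{s}^{(k)}$ because this is the unique $L$-initial family of size $\binom{n-s}{k-s}$, and Lemma \ref{lem:nonempty_2_uniform_1_inter} identifies $\mathcal{R}_{s}^{(l)}$, of size $\binom{n}{l}-\binom{n-s}{l}$, as the largest $l$-family cross-intersecting with it, whence $|\mathcal{B}|\leq\binom{n}{l}-\binom{n-s}{l}$ and $|\mathcal{D}_l(\mathcal{A})|\geq\binom{n-s}{l}$.

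For the forward direction of the equality a direct computation suffices. If $\mathcal{A}=\mathcal{P}_S^{(k)}$ with $|S|=s$, then an $l$-set $D$ is disjoint from some $k$-set containing $S$ if and only if $D\cap S=\emptyset$: when $D\cap S\neq\emptyset$ every $k$-set containing $S$ meets $D$, while when $D\cap S=\emptyset$ there is room to complete $S$ to a $k$-set avoiding $D$ because $|[n]\setminus(S\cup D)|=n-s-l\geq k-s$, which is exactly $n\geq k+l$. Hence $\mathcal{D}_l(\mathcal{P}_S^{(k)})=\binom{[n]\setminus S}{l}$ has size $\binom{n-s}{l}$.

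The main obstacle is the reverse implication: deducing $\mathcal{A}=\mathcal{P}_S^{(k)}$ from $|\mathcal{D}_l(\mathcal{A})|=\binom{n-s}{l}$. This is genuinely the delicate step, because the inequality is insensitive to the identity of $\mathcal{A}$ — compressing to an $L$-initial family preserves $|\mathcal{A}|$ and, by Theorem \ref{thm:Hilton_theorem}, the cross-intersecting structure, but destroys all information about the original $\mathcal{A}$, so the equality cannot simply be read off the compressed picture. What saves the argument is that the relevant cardinality $\binom{n-s}{n-k}$ is a binomial coefficient of the form $\binom{m}{t}$, the regime in which the Kruskal--Katona minimizers are unique up to relabeling. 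Concretely, I would invoke the equality analysis of Füredi--Griggs \cite{FG} and Mörs \cite{Mors}: equality $|\partial_l\overline{\mathcal{A}}|=\binom{n-s}{l}$ propagates up the chain of immediate shadows (each intermediate shadow must also be of minimum size), reducing to the uniqueness of the immediate-shadow minimizer for a full binomial cardinality, which forces $\overline{\mathcal{A}}=\binom{M}{n-k}$ for some $(n-s)$-set $M$. Complementing back gives $\mathcal{A}=\mathcal{P}_S^{(k)}$ with $S=[n]\setminus M$ and $|S|=s$, completing the characterization. An alternative, more self-contained route to this uniqueness is to run the Erdős--Ko--Rado shifts directly on $\mathcal{A}$, checking that each shift does not increase $|\mathcal{D}_l(\mathcal{A})|$ and is strict unless the family is already shifted in the appropriate coordinate, then analyzing the fully left-compressed families of size $\binom{n-s}{k-s}$; the technical heart, in either route, is pinning down exactly when the shadow count fails to drop.
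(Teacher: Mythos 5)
The paper offers no proof of this lemma to compare against: it is quoted from the literature, with the remark that it ``was implicitly given in \cite{FG,Mors} and was stated explicitly in \cite[Proposition 2.3]{SFQ}''. So the question is only whether your reconstruction is sound, and it is, and it follows exactly the route the paper's citations point to. Your complementation step is correct: $D\in\mathcal D_l(\mathcal A)$ if and only if $D\subseteq[n]\setminus A$ for some $A\in\mathcal A$, so $\mathcal D_l(\mathcal A)$ is the $l$-th shadow of $\overline{\mathcal A}\subseteq\binom{[n]}{n-k}$, and $|\overline{\mathcal A}|=\binom{n-s}{k-s}=\binom{n-s}{n-k}$ with $n-s\geq n-k>l$, so Kruskal--Katona gives $|\mathcal D_l(\mathcal A)|\geq\binom{n-s}{l}$. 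Your in-excerpt alternative for the bound is also valid: the unique $L$-initial family of size $\binom{n-s}{k-s}$ in $\binom{[n]}{k}$ is indeed $\mathcal P_s^{(k)}$, so Theorem \ref{thm:Hilton_theorem} together with Lemma \ref{lem:nonempty_2_uniform_1_inter} (with the roles of $k$ and $l$ swapped, legitimate since $n\geq k+l$ and $s\leq k$) bounds $\bigl|\binom{[n]}{l}\setminus\mathcal D_l(\mathcal A)\bigr|$ by $\binom{n}{l}-\binom{n-s}{l}$; and you rightly note this compression argument cannot yield the uniqueness statement. Your verification that $\mathcal D_l(\mathcal P_S^{(k)})=\binom{[n]\setminus S}{l}$ uses $n\geq k+l$ exactly where needed. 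The only step you do not carry out in full is the heart of the equality case---that a family of $(n-k)$-sets of cardinality exactly $\binom{n-s}{n-k}$ achieving the minimum $l$-shadow must be $\binom{M}{n-k}$ for an $(n-s)$-set $M$---which you delegate, after a correct sketch of how equality propagates through the intermediate shadows, to the analyses of \cite{FG} and \cite{Mors}. Since these are precisely the sources the paper itself cites for the entire lemma, this deferral places your proof on the same footing as the paper's treatment, not below it.
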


\begin{Lemma}\label{lem:n>k_j+h}
Let $r\geq 2$ and $\mathcal F_1\subseteq\binom{[n]}{k_1},\mathcal F_2\subseteq\binom{[n]}{k_2},\dots,\mathcal F_r\subseteq\binom{[n]}{k_r}$ be non-empty cross-intersecting families with $|\mathcal F_{i^*}|\geq\binom{n-1}{k_{i^*}-1}$ for some ${i^*}\in [r]$. Let $\bar{k}=\min\{k_i:i\in[r]\setminus\{{i^*}\}\}$. If $n>\bar{k}+k_{i^*}$ and $n\geq k_i+k_{i^*}$ for every $i\in[r]\setminus\{{i^*}\}$, then
\begin{align}\label{eqn:2}
\sum_{i=1}^{r}|\mathcal F_i|\leq\max\left\{\binom{n}{k_{i^*}}-\binom{n-\bar{k}}{k_{i^*}}+\sum_{i\in[r]\setminus\{{i^*}\}}
\binom{n-\bar{k}}{k_i-\bar{k}},\
\sum_{i=1}^{r}\binom{n-1}{k_i-1}\right\}
\end{align}
with equality if and only if there is some $S\in\binom{[n]}{s}$ with $s=1$ or $\bar{k}$ such that $\mathcal F_{i^*}=\mathcal R_{S}^{(k_{i^*})}$ and $\mathcal F_i=\mathcal P_{S}^{(k_{i})}$ for  $i\in[r]\setminus\{{i^*}\}$.
\end{Lemma}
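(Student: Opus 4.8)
The plan is to reduce to the $L$-initial case already settled in Lemma \ref{lem:n>k_j+h_intinal}, and then to upgrade the \emph{sizes} of the extremal families into their exact \emph{structure} by means of Lemma \ref{lem:non_intersecting}. For the bound itself, given non-empty cross-intersecting $\mathcal F_1,\dots,\mathcal F_r$, I would pass to the $L$-initial families $(\mathcal F_1)_L,\dots,(\mathcal F_r)_L$. Applying Theorem \ref{thm:Hilton_theorem} to each pair shows that these remain pairwise cross-intersecting, and since the $L$-compression preserves cardinalities we have $|(\mathcal F_{i^*})_L|=|\mathcal F_{i^*}|\geq\binom{n-1}{k_{i^*}-1}$ and every $(\mathcal F_i)_L$ is non-empty. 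Hence Lemma \ref{lem:n>k_j+h_intinal} applies and yields \eqref{eqn:2} for $\sum_{i=1}^r|(\mathcal F_i)_L|=\sum_{i=1}^r|\mathcal F_i|$.

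For the equality case, suppose $\sum_{i=1}^r|\mathcal F_i|$ meets the bound. Then the $L$-initial families meet it as well, so by Lemma \ref{lem:n>k_j+h_intinal} there is $s\in\{1,\bar k\}$ with $(\mathcal F_{i^*})_L=\mathcal R_s^{(k_{i^*})}$ and $(\mathcal F_i)_L=\mathcal P_s^{(k_i)}$ for $i\neq i^*$. Because $L$-compression preserves sizes, this pins down every cardinality: $|\mathcal F_{i^*}|=\binom{n}{k_{i^*}}-\binom{n-s}{k_{i^*}}$ and $|\mathcal F_i|=\binom{n-s}{k_i-s}$ for $i\neq i^*$. (When $\bar k\geq 2$ the value $|\mathcal F_{i^*}|$ already determines $s$, and the dichotomy $s\in\{1,\bar k\}$ is exactly the one isolated in Lemma \ref{lem:Comparative_size_3}.)

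To recover the structure, fix an index $i_0\neq i^*$ with $k_{i_0}=\bar k$. Since $\mathcal F_{i_0}$ and $\mathcal F_{i^*}$ are cross-intersecting, no member of $\mathcal F_{i^*}$ can lie in $\mathcal D_{k_{i^*}}(\mathcal F_{i_0})$, whence $|\mathcal F_{i^*}|\leq\binom{n}{k_{i^*}}-|\mathcal D_{k_{i^*}}(\mathcal F_{i_0})|$. Now $|\mathcal F_{i_0}|=\binom{n-s}{\bar k-s}$ and $n>\bar k+k_{i^*}$, so Lemma \ref{lem:non_intersecting} (with $\mathcal A=\mathcal F_{i_0}$, $k=\bar k$, $l=k_{i^*}$) gives $|\mathcal D_{k_{i^*}}(\mathcal F_{i_0})|\geq\binom{n-s}{k_{i^*}}$; combined with the previous inequality this forces $|\mathcal F_{i^*}|\leq\binom{n}{k_{i^*}}-\binom{n-s}{k_{i^*}}$, which is already an equality by the preceding paragraph. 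Therefore $|\mathcal D_{k_{i^*}}(\mathcal F_{i_0})|=\binom{n-s}{k_{i^*}}$, and the uniqueness clause of Lemma \ref{lem:non_intersecting} yields $\mathcal F_{i_0}=\mathcal P_S^{(\bar k)}$ for some $S\in\binom{[n]}{s}$. This step, converting the single datum $|\mathcal F_{i_0}|$ into the exact identity $\mathcal F_{i_0}=\mathcal P_S^{(\bar k)}$, is the crux of the argument and the only point where the sharp form of the Kruskal--Katona-type estimate is needed.

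It then remains to cascade. Since $\mathcal F_{i^*}$ is cross-intersecting with $\mathcal F_{i_0}=\mathcal P_S^{(\bar k)}$, Lemma \ref{lem:nonempty_2_uniform_1_inter} gives $\mathcal F_{i^*}\subseteq\mathcal R_S^{(k_{i^*})}$, and comparing sizes via $|\mathcal R_S^{(k_{i^*})}|=\binom{n}{k_{i^*}}-\binom{n-s}{k_{i^*}}=|\mathcal F_{i^*}|$ forces $\mathcal F_{i^*}=\mathcal R_S^{(k_{i^*})}$. Finally, for each $i\neq i^*$ the family $\mathcal F_i$ is cross-intersecting with $\mathcal R_S^{(k_{i^*})}$, so Lemma \ref{lem:nonempty_2_uniform_1_inter} gives $\mathcal F_i\subseteq\mathcal P_S^{(k_i)}$, and again $|\mathcal P_S^{(k_i)}|=\binom{n-s}{k_i-s}=|\mathcal F_i|$ forces $\mathcal F_i=\mathcal P_S^{(k_i)}$. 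The converse is a direct verification: such $\mathcal R_S^{(k_{i^*})}$ and $\mathcal P_S^{(k_i)}$ are non-empty and cross-intersecting, and their total size is $\binom{n}{k_{i^*}}-\binom{n-s}{k_{i^*}}+\sum_{i\neq i^*}\binom{n-s}{k_i-s}$, which equals the right-hand side of \eqref{eqn:2} precisely for the value of $s\in\{1,\bar k\}$ attaining the maximum there, by Lemma \ref{lem:Comparative_size_3}.
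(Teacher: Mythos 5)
Your proof is correct and follows essentially the same route as the paper's: compress via Theorem~\ref{thm:Hilton_theorem}, read off the bound and the extremal sizes from Lemma~\ref{lem:n>k_j+h_intinal}, recover the set $S$ from the uniqueness clause of Lemma~\ref{lem:non_intersecting}, and cascade through $\mathcal F_{i^*}$ with Lemma~\ref{lem:nonempty_2_uniform_1_inter}. The only (harmless) difference is organizational: the paper applies Lemma~\ref{lem:non_intersecting} to every index $i$ with $n>k_i+k_{i^*}$ and then treats the degenerate indices with $n=k_i+k_{i^*}$ separately by complement counting, whereas you apply it just once, to an index $i_0$ with $k_{i_0}=\bar k$ (where the needed strict inequality $n>\bar k+k_{i^*}$ is guaranteed by hypothesis), and then handle all remaining indices uniformly --- a slight streamlining that is valid because Lemma~\ref{lem:nonempty_2_uniform_1_inter} only requires $n\geq k_i+k_{i^*}$.
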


\begin{proof}
The upper bound \eqref{eqn:2} for $\sum_{i=1}^{r}|\mathcal F_i|$ follows from Theorem \ref{thm:Hilton_theorem} and Lemma \ref{lem:n>k_j+h_intinal}. Assume that the equality in \eqref{eqn:2} holds. Then by Theorem \ref{thm:Hilton_theorem} and Lemma \ref{lem:n>k_j+h_intinal}, $|\mathcal F_{i^*}|=\binom{n}{k_{i^*}}-\binom{n-s}{k_{i^*}}$ and $|\mathcal F_i|=\binom{n-s}{k_i-s}$ for $i\in[r]\setminus\{{i^*}\}$, where $s=1$ or $\bar{k}$.
Let $H_1=\{i\in[r]:n>k_i+k_{i^*},i\neq {i^*}\}$ and $H_2=\{i\in[r]:n=k_i+k_{i^*},i\neq {i^*}\}$. Since $n>\bar{k}+k_{i^*}$, $H_1\neq\emptyset$.
For any $i\in H_1$, $\binom{n}{k_{i^*}}-|\mathcal D_{k_{i^*}}(\mathcal F_i)|\geq|\mathcal F_{i^*}|$, i.e., $|\mathcal D_{k_{i^*}}(\mathcal F_i)|\leq\binom{n-s}{k_{i^*}}$.
By Lemma \ref{lem:non_intersecting}, $|\mathcal D_{k_{i^*}}(\mathcal F_i)|=\binom{n-s}{k_{i^*}}$ and  there exists $S_i\in\binom{[n]}{s}$ for $i\in H_1$ such that $\mathcal F_i=\mathcal P_{S_i}^{(k_i)}$.

We claim that $S_{i_1}=S_{i_2}$, written as $S$, for any $i_1,i_2\in H_1$, and $\mathcal F_{i^*}=\mathcal R_{S}^{(k_{i^*})}$. Indeed, given any $h\in H_1$, let $S=S_h$. Since $\mathcal F_{i^*}$ and $\mathcal F_h=\mathcal P_{S_h}^{(k_h)}$ are cross-intersecting and $|\mathcal F_{i^*}|=\binom{n}{k_{i^*}}-\binom{n-s}{k_{i^*}}$, it follows from Lemma \ref{lem:nonempty_2_uniform_1_inter} that $\mathcal F_{i^*}=\mathcal R_{S}^{(k_{i^*})}$. For any $i\in H_1\setminus\{h\}$, since $\mathcal F_{i^*}$ and $\mathcal F_i$ are cross-intersecting and $|\mathcal F_i|=\binom{n-s}{k_i-s}$, by Lemma \ref{lem:nonempty_2_uniform_1_inter}, we have $\mathcal F_i=\mathcal P_{S}^{(k_i)}$.

For every $i\in H_2$, since $\mathcal F_{i^*}$ and $\mathcal F_i$ are cross-intersecting, $\mathcal F_i\subseteq\binom{[n]}{k_i}\setminus\overline{\mathcal F_{i^*}}$. Since $|\mathcal F_i|=\binom{n-s}{k_i-s}=|\binom{[n]}{k_i}\setminus\overline{\mathcal F_{i^*}}|$, we have $\mathcal F_i=\binom{[n]}{k_i}\setminus\overline{\mathcal F_{i^*}}=\mathcal P_{S}^{(k_i)}$.
\end{proof}

To complete the proof of Theorem \ref{thm:nonempty_k1_k2_kr_other}, we still need to examine the case of $n=\bar{k}+k_{i^*}$ and $n\geq k_i+k_{i^*}$ for every $i\in[r]\setminus\{{i^*}\}$, where $\bar{k}=\min\{k_i:i\in[r]\setminus\{{i^*}\}\}$. In this case $k_i=\bar{k}$ for every $i\in[r]\setminus\{i^*\}$. The following theorem will be used.

\begin{Theorem}\label{thm:cross_inter_famliysize}{\rm \cite[Theorem 1.4]{SFQ}}
Let $n,k,l$ and $\tau$ be positive integers with $n\geq k+l$ and $l\geq\tau$. Let $c$ be a positive integer. If $\mathcal A\subseteq\binom{[n]}{k}$ and $\mathcal B\subseteq\binom{[n]}{l}$ are cross-intersecting and $\binom{n-\tau}{l-\tau}\leq|\mathcal B|\leq\binom{n-1}{l-1}$, then
$$|\mathcal A|+c|\mathcal B|\leq\max\left\{\binom{n}{k}-\binom{n-\tau}{k}+c\binom{n-\tau}{l-\tau},
\binom{n-1}{k-1}+c\binom{n-1}{l-1}\right\}$$
with equality if and only if, up to isomorphism, one of the following holds:
\begin{enumerate}
\item[$(1)$] when $n>k+l$, $\mathcal A=\mathcal R_{s}^{(k)}$ and $\mathcal B=\mathcal P_{s}^{(l)}$, where
    $$s=\left\{
    \begin{array}{ll}
    1, & {\text if\ } \binom{n}{k}-\binom{n-r}{k}+c\binom{n-\tau}{l-\tau}<\binom{n-1}{k-1}+c\binom{n-1}{l-1};  \\
    \tau, & {\text if\ } \binom{n}{k}-\binom{n-r}{k}+c\binom{n-\tau}{l-\tau}>\binom{n-1}{k-1}+c\binom{n-1}{l-1};\\
    1 \text{\ or\ } \tau, & {\text if\ } \binom{n}{k}-\binom{n-r}{k}+c\binom{n-\tau}{l-\tau}= \binom{n-1}{k-1}+c\binom{n-1}{l-1};
    \end{array}
    \right.$$
\item[$(2)$] when $n=k+l$, $\mathcal B\subseteq\binom{[n]}{l}$ with $|\mathcal B|=\binom{n-\tau}{l-\tau}$ if $c<1$ or $\binom{n-\tau}{l-\tau}\leq|\mathcal B|\leq\binom{n-1}{l-1}$ if $c=1$ or $|\mathcal B|=\binom{n-1}{l-1}$ if $c>1$, and $\mathcal A=\binom{[n]}{k}\setminus\overline{\mathcal B}$.
\end{enumerate}
\end{Theorem}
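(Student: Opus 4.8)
The plan is to first compress and reformulate, and then to run a dimension-reduction induction rather than a convexity argument. By Theorem~\ref{thm:Hilton_theorem} I may assume $\mathcal A$ and $\mathcal B$ are both $L$-initial, and taking $\mathcal A$ as large as possible I may assume $\mathcal A=\binom{[n]}{k}\setminus\mathcal D_k(\mathcal B)$, so that $|\mathcal A|+c|\mathcal B|=\binom{n}{k}-|\mathcal D_k(\mathcal B)|+c|\mathcal B|$. The constraint $\binom{n-\tau}{l-\tau}\leq|\mathcal B|\leq\binom{n-1}{l-1}$ is exactly $\mathcal P_{\tau}^{(l)}\subseteq\mathcal B\subseteq\mathcal P_{1}^{(l)}$, since the first $\binom{n-s}{l-s}$ sets in $\prec$ are precisely $\mathcal P_s^{(l)}$. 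A direct check using $n\geq k+l$ gives $\mathcal D_k(\mathcal P_{\tau}^{(l)})=\binom{[\tau+1,n]}{k}$ and $\mathcal D_k(\mathcal P_{1}^{(l)})=\binom{[2,n]}{k}$, so the two boundary families $\mathcal B=\mathcal P_\tau^{(l)}$ and $\mathcal B=\mathcal P_1^{(l)}$ produce exactly $\binom nk-\binom{n-\tau}{k}+c\binom{n-\tau}{l-\tau}$ and $\binom{n-1}{k-1}+c\binom{n-1}{l-1}$, the two quantities in the maximum. Everything thus reduces to showing that $g(\mathcal B):=\binom nk-|\mathcal D_k(\mathcal B)|+c|\mathcal B|$ is maximised at one of the two endpoints of the ``interval'' $\mathcal P_\tau^{(l)}\subseteq\mathcal B\subseteq\mathcal P_1^{(l)}$.

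When $n=k+l$ the argument is immediate and needs no compression: here $D\cap B=\emptyset$ forces $B=[n]\setminus D$, so $\mathcal D_k(\mathcal B)=\overline{\mathcal B}$ and $g(\mathcal B)=\binom nk+(c-1)|\mathcal B|$ is affine in $|\mathcal B|$. Over $\binom{n-\tau}{l-\tau}\leq|\mathcal B|\leq\binom{n-1}{l-1}$ it is extremised at an endpoint according to the sign of $c-1$, which reproduces case~(2) verbatim (constant if $c=1$, increasing if $c>1$), with the extremal $\mathcal A=\binom{[n]}{k}\setminus\overline{\mathcal B}$ forced by maximality.

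For $n>k+l$ a naive convexity argument fails, because $|\mathcal B|\mapsto|\mathcal D_k(\mathcal B)|$ is genuinely non-concave: its lexicographic increments equal $\binom{n-\max B}{n-k-l}$, which jump upward at each lex block boundary. I would instead induct on $n$ in a way that sidesteps this walk entirely. The key dichotomy is that for $L$-initial cross-intersecting $\mathcal A,\mathcal B$ (with $\mathcal A$ maximal), at least one of them lies in the star at $1$: if $|\mathcal B|>\binom{n-1}{l-1}$ then $\mathcal B\supsetneq\mathcal P_1^{(l)}$, whence $\mathcal D_k(\mathcal B)\supsetneq\binom{[2,n]}{k}$ and $|\mathcal A|<\binom{n-1}{k-1}$, i.e.\ $\mathcal A\subsetneq\mathcal P_1^{(k)}$; since the hypothesis already gives $\mathcal B\subseteq\mathcal P_1^{(l)}$, I peel $\mathcal B$ first. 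Peeling the common element $1$ off the star-contained family turns $\mathcal A,\mathcal B$ in $[n]$ into a cross-intersecting pair in $[2,n]\cong[n-1]$ with one uniformity dropped by one, and shifts the objective by the explicit constant $\binom{n-1}{k-1}$ (peeling $\mathcal B$) or $c\binom{n-1}{l-1}$ (peeling $\mathcal A$); the residual problem has the same shape with $\tau$ decreased by one, and the recursion terminates when one family becomes empty, where the bound is immediate. Since $n-k-l$ is invariant under peeling, $n>k+l$ persists throughout. Assembling the accumulated shift constants reproduces exactly $V_A=\binom nk-\binom{n-\tau}{k}+c\binom{n-\tau}{l-\tau}$ and $V_B=\binom{n-1}{k-1}+c\binom{n-1}{l-1}$, with the winning $s\in\{1,\tau\}$ selected by whichever term is larger.

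Finally, for the equality characterisation when $n>k+l$, I would run the peeling backward. Equality propagates only if each reverse step lifts the extremal reduced pair uniquely, and the rigidity is supplied by Lemma~\ref{lem:non_intersecting}: if $|\mathcal A|=\binom{n-s}{k-s}$ then $|\mathcal D_l(\mathcal A)|\geq\binom{n-s}{l}$ with equality only for $\mathcal A=\mathcal P_S^{(k)}$, and Lemma~\ref{lem:nonempty_2_uniform_1_inter} then forces the partner to be the corresponding $\mathcal R_S$ or $\mathcal P_S$. This pins the extremal families, up to isomorphism, to $\mathcal A=\mathcal R_S^{(k)}$ and $\mathcal B=\mathcal P_S^{(l)}$ with $|S|=s\in\{1,\tau\}$, exactly as in case~(1), while the identities of Lemma~\ref{lem:binomial} collapse the endpoint values to the stated closed forms. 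I expect the main obstacle to be precisely this inductive bookkeeping: verifying that the star-dichotomy holds at every stage, that the accumulated shift constants telescope to $V_A$ and $V_B$ rather than to some intermediate $s$, and that the equality clauses survive each reverse peeling step — that is, turning the clean ``peel element $1$'' idea into a correctly accounted induction is where the real work lies.
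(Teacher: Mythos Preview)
This theorem is not proved in the paper; it is quoted from \cite[Theorem~1.4]{SFQ} and used as a black box in the proof of Lemma~\ref{lem:n=k_j+h}. There is therefore no in-paper argument to compare your proposal against.

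As for the proposal itself, the reduction to $L$-initial families and the treatment of $n=k+l$ are correct. The peeling induction for $n>k+l$, however, does not run as cleanly as you describe. After one peel of $\mathcal B$ (forced by $\mathcal B\subseteq\mathcal P_1^{(l)}$), the residual $\mathcal B'\subseteq\binom{[2,n]}{l-1}$ need not lie in the star at $2$; your dichotomy then forces you to peel $\mathcal A_0$ instead, but in that step $\tau$ does not drop and the lower constraint on the new $|\mathcal B'_0|=|\mathcal B'|-\binom{n-2}{l-2}$ is no longer of the shape $\binom{m-\tau'}{l'-\tau'}$. So ``same shape with $\tau$ decreased by one'' fails precisely when the peeling alternates, and it is not clear that the accumulated shifts telescope to $V_A$ or $V_B$ rather than to some intermediate $g(s)$. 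A route that does work --- and which the present paper itself deploys for its own results --- is to show, for each $1\leq s\leq\tau-1$, that on the slab $\mathcal P_{s+1}^{(l)}\subseteq\mathcal B\subseteq\mathcal P_s^{(l)}$ the objective $|\mathcal A|+c|\mathcal B|$ is maximised at an endpoint (this is the bipartite independent-set count of Lemma~\ref{lem:n>k_j+h_intinal} and Claim~\ref{clm:bipartite_neighbour}), and then chain the slabs via a convexity-type computation in the spirit of Lemma~\ref{lem:Comparative_size_3} to push the global maximum to $s\in\{1,\tau\}$.
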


\begin{Lemma}\label{lem:n=k_j+h}
Let $r\geq 2$ and $\mathcal F_1\subseteq\binom{[n]}{k_1},\mathcal F_{2}\subseteq\binom{[n]}{k_{2}},\dots,\mathcal F_r\subseteq\binom{[n]}{k_r}$ be non-empty cross-intersecting families with $|\mathcal F_{i^*}|\geq\binom{n-1}{k_{i^*}-1}$ for some $i^*\in[r]$ and $k_i=h$ for every $i\in [r]\setminus [i^*]$ where $h$ is a positive integer. If $n=h+k_{i^*}$,
then
$$\sum_{i=1}^{r}|\mathcal F_i|\leq\binom{n-1}{k_{i^*}-1}+(r-1)\binom{n-1}{h-1}$$
with equality if and only if
\begin{enumerate}
\item[$(1)$] when $r=2$, $\mathcal F_{{i^*}}=\binom{[n]}{k_{i^*}}\setminus\overline{\mathcal F_{3-{i^*}}}$ and $1\leq|\mathcal F_{3-{i^*}}|\leq\binom{n-1}{h-1}$;
\item[$(2)$] when $r>2$ and $n>2h$, there exists $x\in[n]$ such that $\mathcal F_{i^*}=\{F\in\binom{[n]}{k_{i^*}}:x\in F\}$ and $\mathcal F_i=\{F\in\binom{[n]}{h}:x\in F\}$ for every $i\in[r]\setminus\{{i^*}\}$;
\item[$(3)$] when $r>2$ and $n\leq2h$, $\mathcal F_i=\mathcal F$ for every $i\in[r]\setminus\{{i^*}\}$ and $\mathcal F_{i^*}=\binom{[n]}{k_{i^*}}\setminus\overline{\mathcal F}$, where $\mathcal F\subseteq\binom{[n]}{h}$ is an intersecting family with $|\mathcal F|=\binom{n-1}{h-1}$.
\end{enumerate}
\end{Lemma}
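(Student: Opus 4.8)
The plan is to pass to complements and reduce the whole configuration to a question about families of a single uniformity $h$ in $\binom{[n]}{h}$. Since $n=h+k_{i^*}$, for $F\in\binom{[n]}{k_{i^*}}$ and $G\in\binom{[n]}{h}$ we have $|F|+|G|=n$, so $F\cap G=\emptyset$ exactly when $G=[n]\setminus F$. Hence $\mathcal F_{i^*}$ and $\mathcal F_i$ (with $i\neq i^*$) are cross-intersecting if and only if $\mathcal F_i\cap\overline{\mathcal F_{i^*}}=\emptyset$. Writing $\mathcal B=\overline{\mathcal F_{i^*}}\subseteq\binom{[n]}{h}$, this says precisely that $\mathcal F_i\subseteq\binom{[n]}{h}\setminus\mathcal B$ for every $i\in[r]\setminus\{i^*\}$. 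Note $|\mathcal B|=|\mathcal F_{i^*}|\geq\binom{n-1}{k_{i^*}-1}=\binom{n-1}{h}$, using $k_{i^*}-1=n-h-1$. No shifting or $L$-initial reduction is needed here; the argument runs on the given families directly.

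First I would prove the inequality by a crude count that ignores the mutual cross-intersection of the $\mathcal F_i$ ($i\neq i^*$). Each such $\mathcal F_i$ lies in $\binom{[n]}{h}\setminus\mathcal B$, so $|\mathcal F_i|\leq\binom{n}{h}-|\mathcal B|$, and therefore
\[
\sum_{i=1}^{r}|\mathcal F_i|=|\mathcal B|+\sum_{i\neq i^*}|\mathcal F_i|\leq|\mathcal B|+(r-1)\Bigl(\binom{n}{h}-|\mathcal B|\Bigr)=(r-1)\binom{n}{h}-(r-2)|\mathcal B|.
\]
For $r=2$ the right-hand side equals $\binom{n}{h}=\binom{n-1}{h}+\binom{n-1}{h-1}$, the claimed bound. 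For $r>2$ the coefficient $-(r-2)$ of $|\mathcal B|$ is negative, so I substitute the lower bound $|\mathcal B|\geq\binom{n-1}{h}$ to obtain $(r-1)\binom{n}{h}-(r-2)\binom{n-1}{h}=\binom{n-1}{h}+(r-1)\binom{n-1}{h-1}$, again the claimed bound (recall $\binom{n-1}{h}=\binom{n-1}{k_{i^*}-1}$). This is where the hypothesis $|\mathcal F_{i^*}|\geq\binom{n-1}{k_{i^*}-1}$ is essential.

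The bulk of the work is the equality analysis, which is where the three cases of the statement arise. For $r=2$, equality forces $\mathcal F_{3-i^*}=\binom{[n]}{h}\setminus\mathcal B$, i.e. $\mathcal F_{i^*}=\binom{[n]}{k_{i^*}}\setminus\overline{\mathcal F_{3-i^*}}$; non-emptiness of $\mathcal F_{3-i^*}$ together with $|\mathcal F_{i^*}|\geq\binom{n-1}{h}$ then pin down $1\leq|\mathcal F_{3-i^*}|\leq\binom{n-1}{h-1}$, giving case $(1)$. (Alternatively, this is exactly the $n=k+l$, $c=1$ instance of Theorem \ref{thm:cross_inter_famliysize}.) For $r>2$, equality forces simultaneously $|\mathcal B|=\binom{n-1}{h}$ and $|\mathcal F_i|=\binom{n}{h}-|\mathcal B|$ for every $i\neq i^*$, so each $\mathcal F_i$ equals the fixed family $\mathcal C:=\binom{[n]}{h}\setminus\mathcal B$, of size $\binom{n-1}{h-1}$, and $\mathcal F_{i^*}=\overline{\mathcal B}=\binom{[n]}{k_{i^*}}\setminus\overline{\mathcal C}$. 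Since the $\mathcal F_i$ are pairwise cross-intersecting and all equal to $\mathcal C$, the family $\mathcal C$ must be intersecting.

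Finally I would split on the size of $n$ relative to $2h$ (equivalently $k_{i^*}$ relative to $h$). When $n>2h$, the uniqueness part of the Erd\H{o}s--Ko--Rado theorem (Theorem \ref{thm:EKR}) applied to the intersecting family $\mathcal C$ of maximum size $\binom{n-1}{h-1}$ forces $\mathcal C=\{F\in\binom{[n]}{h}:x\in F\}$ for some $x$, whence $\mathcal F_{i^*}=\binom{[n]}{k_{i^*}}\setminus\overline{\mathcal C}=\{F\in\binom{[n]}{k_{i^*}}:x\in F\}$; this is case $(2)$. When $n\leq 2h$, no such uniqueness is available, and $\mathcal C$ may be an arbitrary intersecting family of size $\binom{n-1}{h-1}$ (every family being intersecting when $n<2h$), giving case $(3)$. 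I expect the main obstacle to be keeping the complementation bookkeeping straight — in particular verifying the identities $\binom{n-1}{k_{i^*}-1}=\binom{n-1}{h}$ and $\binom{[n]}{k_{i^*}}\setminus\overline{\mathcal C}=\overline{\binom{[n]}{h}\setminus\mathcal C}$ — and confirming that these extremal configurations are genuinely attainable (take $\mathcal C$ a star) so that the stated bound is tight.
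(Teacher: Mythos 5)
Your proof is correct, and it takes a genuinely different route from the paper's. The paper first uses the Kruskal--Katona/Hilton reduction (Theorem \ref{thm:Hilton_theorem} with Lemma \ref{lem:nonempty_2_uniform_1_inter}) to secure $|\mathcal F_i|\leq\binom{n-1}{h-1}$ for $i\neq i^*$, and then invokes the weighted two-family theorem of Shi--Frankl--Qian (Theorem \ref{thm:cross_inter_famliysize}) with $l=\tau=h$, taking $c=1$ for $r=2$ and $c=r-1$ for $r>2$; the sum bound is recovered by averaging $|\mathcal F_{i^*}|+(r-1)|\mathcal F_i|$ over $i\neq i^*$, using the comparison \eqref{eqn:12-29}. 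You instead exploit the degeneracy at $n=h+k_{i^*}$ head-on: the unique $h$-set disjoint from a $k_{i^*}$-set is its complement, so cross-intersection with $\mathcal F_{i^*}$ is exactly containment in $\binom{[n]}{h}\setminus\mathcal B$ with $\mathcal B=\overline{\mathcal F_{i^*}}$, and the lemma collapses to the linear count $\sum_i|\mathcal F_i|\leq(r-1)\binom{n}{h}-(r-2)|\mathcal B|$, where the hypothesis $|\mathcal B|\geq\binom{n-1}{k_{i^*}-1}=\binom{n-1}{h}$ enters precisely through the sign of the coefficient $-(r-2)$. Your equality analysis is also sound: for $r>2$ the strict negativity of that coefficient forces $|\mathcal B|=\binom{n-1}{h}$ and $\mathcal F_i=\binom{[n]}{h}\setminus\mathcal B$ for all $i\neq i^*$ simultaneously, after which intersecting-ness of the common family (from pairwise cross-intersection among at least two indices $\neq i^*$) and, for $n>2h$, EKR uniqueness finish exactly as in the paper. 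What the paper's route buys is uniformity --- one black box (Theorem \ref{thm:cross_inter_famliysize}) serves both this lemma and the case analysis elsewhere; what your route buys is a short, self-contained argument whose only external input is Theorem \ref{thm:EKR}, bypassing the $L$-initial machinery entirely and making visible why the $n=k+l$ case is special (the disjointness graph is a perfect matching). The one step you flagged but deferred --- that every configuration in $(1)$--$(3)$ actually attains the bound --- is a one-line verification via $\binom{n-1}{k_{i^*}-1}=\binom{n-1}{h}$ and $\binom{n-1}{k_{i^*}}=\binom{n-1}{h-1}$, so there is no gap.
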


\begin{proof}
Since $|\mathcal F_{i^*}|\geq\binom{n-1}{k_{i^*}-1}$, by Theorem \ref{thm:Hilton_theorem} and Lemma \ref{lem:nonempty_2_uniform_1_inter}, $|\mathcal F_i|\leq\binom{n-1}{k_i-1}=\binom{n-1}{h-1}$ for every $i\in[r]\setminus\{{i^*}\}$.
If $r=2$, then apply Theorem \ref{thm:cross_inter_famliysize} with $k=k_{i^*}$, $l=\tau=h$ and $c=1$ to complete the proof. Assume that $r>2$. Since $n=h+k_{i^*}$,
\begin{align}\label{eqn:12-29}
\binom{n-1}{k_{i^*}-1}+(r-1)\binom{n-1}{h-1}&=\binom{n-1}{k_{i^*}-1}+\binom{n-1}{k_{i^*}}+(r-2)
\binom{n-1}{h-1}
\geq\binom{n}{k_{i^*}}+r-2.
\end{align}
It follows from Theorem \ref{thm:cross_inter_famliysize} with $k=k_{i^*}$, $l=\tau=h$ and $c=r-1>1$ that for any $i\in [r]\setminus\{i^*\}$,
\begin{align*}
|\mathcal F_{i^*}|+(r-1)|\mathcal F_i|&\leq \max\left\{\binom{n}{k_{i^*}}+r-2,\ \binom{n-1}{k_{i^*}-1}+(r-1)\binom{n-1}{h-1}\right\}\\
&=\binom{n-1}{k_{i^*}-1}+(r-1)\binom{n-1}{h-1}
\end{align*}
with equality if and only if $|\mathcal F_i|=\binom{n-1}{h-1}$ and $\mathcal F_{i^*}=\binom{[n]}{k_{i^*}}\setminus\overline{\mathcal F_i}$. Therefore,
$$\sum_{i=1}^{r}|\mathcal F_i|=\frac{1}{r-1}\sum_{i\in[r]\setminus\{{i^*}\}}(|\mathcal F_{i^*}|+(r-1)|\mathcal F_i|)\leq \binom{n-1}{k_{i^*}-1}+(r-1)\binom{n-1}{h-1}$$
with equality if and only if $\mathcal F_i=\mathcal F$ with $|\mathcal F|=\binom{n-1}{h-1}$ for every $i\in[r]\setminus\{{i^*}\}$ and $\mathcal F_{i^*}=\binom{[n]}{k_{i^*}}\setminus\overline{\mathcal F}$, where $\mathcal F\subseteq\binom{[n]}{h}$ is an intersecting family because of the cross-intersecting property of $\mathcal F_i$ for $i\in[r]\setminus\{{i^*}\}$. When $n>2h$, by Theorem \ref{thm:EKR}, $\mathcal F=\{F\in\binom{[n]}{h}:x\in F\}$ for some $x\in[n]$, and so $\mathcal F_{i^*}=\binom{[n]}{k_{i^*}}\setminus\overline{\mathcal F}=\{F\in\binom{[n]}{k_{i^*}}:x\in F\}$.
\end{proof}

Now we are in a position to give a proof of Theorem \ref{thm:nonempty_k1_k2_kr_other}.

\begin{proof}[{\bf Proof of Theorem \ref{thm:nonempty_k1_k2_kr_other}}]
When $k_i=\bar{k}$ for every $i\in[r]\setminus\{i^*\}$ and $n=\bar{k}+k_{i^*}$, it follows from \eqref{eqn:12-29} that $\binom{n-1}{k_{i^*}-1}+(r-1)\binom{n-1}{\bar{k}-1}\geq\binom{n}{k_{i^*}}+r-2$. Combine Lemmas \ref{lem:n>k_j+h} and \ref{lem:n=k_j+h} to complete the proof.
\end{proof}

\section{Proof of Theorem \ref{thm:nonempty_k1_k2_kr_largest}}

We are ready to prove Theorem \ref{thm:nonempty_k1_k2_kr_largest} by employing Theorem \ref{thm:nonempty_k1_k2_kr_other}.

\begin{proof}[{\bf Proof of Theorem \ref{thm:nonempty_k1_k2_kr_largest}}]
If $|\mathcal F_i|<\binom{n-1}{k_i-1}$ for every $i\in[r]$, then $\sum_{i=1}^{r}|\mathcal F_i|<\sum_{i=1}^{r}\binom{n-1}{k_i-1}\leq\max\{\binom{n}{k_1}-\binom{n-k_r}{k_1}+\sum_{i=2}^{r}\binom{n-k_r}{k_i-k_r},
\sum_{i=1}^{r}\binom{n-1}{k_i-1}\}$.

Assume that there exists ${i^*}\in[r]$ such that $|\mathcal F_{i^*}|\geq\binom{n-1}{k_{i^*}-1}$. We shall compare the sizes of extremal families coming from Theorems \ref{thm:nonempty_k1_k2_kr_other}.

\noindent\underline{{\bf Case 1 $r=2$} and $n=k_1+k_2$.} In this case, $\binom{n}{k_1}=\binom{n}{k_2}=\binom{n-1}{k_1-1}+\binom{n-1}{k_2-1}$.
It follows from $(i)$ of Theorem \ref{thm:nonempty_k1_k2_kr_other} $(1)$ that
$|\mathcal F_1|+|\mathcal F_2|\leq\binom{n}{k_1}$ with equality if and only if $$\mathcal F_{1}=\binom{[n]}{k_1}\setminus\overline{\mathcal F_{2}}\text{ with } 1\leq|\mathcal F_{2}|\leq\binom{n-1}{k_{2}-1}$$ or $$\mathcal F_{2}=\binom{[n]}{k_2}\setminus\overline{\mathcal F_{1}} \text{ with }1\leq|\mathcal F_{1}|\leq\binom{n-1}{k_{1}-1}.$$
The later case is equivalent to $\mathcal F_{1}=\binom{[n]}{k_1}\setminus\overline{\mathcal F_{2}}$ with $\binom{n-1}{k_{2}-1}\leq|\mathcal F_{2}|\leq\binom{n}{k_2}-1$.
Therefore, the equality holds if and only if
$$\mathcal F_{1}=\binom{[n]}{k_1}\setminus\overline{\mathcal F_{2}}\text{ with }0<|\mathcal F_{2}|<\binom{n}{k_2}.$$

\noindent\underline{{\bf Case 2 $r>2$} and $n=k_1+k_r$.} In this case, $\binom{n-1}{k_{1}-1}+(r-1)\binom{n-1}{k_r-1}\geq\binom{n}{k_{1}}+r-2$.
Since $n\geq k_1+k_2$ and $k_2\geq\cdots\geq k_r$, we have $k_2=\cdots=k_r$.

If $k_1=k_2$, then by Theorem \ref{thm:nonempty_r_uniform} $(2)$$(ii)$, $\sum_{i=1}^{r}|\mathcal F_i|\leq\sum_{i=1}^{r}\binom{n-1}{k_i-1}$ with equality if and only if $\mathcal F_i=\mathcal F$ for $i\in[r]$, where $\mathcal F$ is an intersecting family with $|\mathcal F|=\binom{n-1}{k_1-1}$.

Assume that $k_1>k_2$. By Theorem \ref{thm:nonempty_k1_k2_kr_other} $(1)$$(ii)$ and $(2)$, if $\sum_{i=1}^{r}|\mathcal F_i|$ reaches the largest possible value, then up to isomorphism, either $\mathcal F_i=\mathcal P_1^{(k_i)}$ for every $i\in[r]$, or $\mathcal F_{i^*}=\mathcal R_{k_r}^{(k_{i^*})}$ with $i^*\in[2,r]$ and $\mathcal F_i=\mathcal P_{k_r}^{(k_i)}$ for every $i\in[r]\setminus\{i^*\}$.
If $k_r=1$, then the above two classes of extremal families are the same and so $\sum_{i=1}^{r}|\mathcal F_i|\leq\sum_{i=1}^{r}\binom{n-1}{k_i-1}$ with equality if and only if $\mathcal F_i=\mathcal P_1^{(k_i)}$ for every $i\in[r]$ up to isomorphism.
If $k_r>1$, then
\begin{align*}
\sum_{i=1}^{r}|\mathcal P_1^{(k_i)}|&=\sum_{i=1}^{r}\binom{n-1}{k_i-1}=\binom{n-1}{k_1-1}+\binom{n-1}{k_2-1}+\sum_{i=3}^{r}\binom{n-1}{k_i-1}
>\binom{n}{k_{i^*}}+r-2 \\
& =\binom{n}{k_{i^*}}+r-2+\binom{n-k_r}{k_1-k_r}-
\binom{n-k_r}{k_{i^*}}=|\mathcal R_{k_r}^{(k_{i^*})}|+\sum_{i\in[r]\setminus\{i^*\}}|\mathcal P_{k_r}^{(k_i)}|,
\end{align*}
and hence $\sum_{i=1}^{r}|\mathcal F_i|\leq\sum_{i=1}^{r}\binom{n-1}{k_i-1}$ with equality if and only if  $\mathcal F_i=\mathcal P_1^{(k_i)}$ for every $i\in[r]$ up to isomorphism.

\noindent\underline{{\bf Case 3 $n>k_1+k_r$}.}
Since $k_1\geq\cdots\geq k_r$, we have $n>k_{i^*}+\overline{k_{i^*}}$ for every $i^*\in[r]$, where $\overline{k_{i^*}}=\min\{k_i:i\in[r]\setminus\{{i^*}\}\}$.
By Theorem \ref{thm:nonempty_k1_k2_kr_other} $(2)$, if the value of $\sum_{i=1}^{r}|\mathcal F_i|$ reaches the largest, then up to isomorphism, one of the following three cases holds:
\begin{enumerate}
\item[$(\alpha)$] $\mathcal F_i=\mathcal P_{1}^{(k_i)}$ for every $i\in[r]$;
\item[$(\beta)$] if $i^*\neq r$, then $\mathcal F_{i^*}=\mathcal R_{k_r}^{(k_{i^*})}$ and $\mathcal F_{i}=\mathcal P_{k_r}^{(k_i)}$ for every $i\in[r]\setminus\{i^*\}$;
\item[$(\gamma)$] if $i^*=r$, then $\mathcal F_{i^*}=\mathcal R_{k_{r-1}}^{(k_{i^*})}$ and $\mathcal F_{i}=\mathcal P_{k_{r-1}}^{(k_i)}$ for every $i\in[r-1]$.
\end{enumerate}

If $k_{r-1}=k_r=1$, then the above three cases are the same and so $\sum_{i=1}^{r}|\mathcal F_i|\leq\sum_{i=1}^{r}\binom{n-1}{k_i-1}$ with equality if and only if  $\mathcal F_i=\mathcal P_1^{(k_i)}$ for every $i\in[r]$ up to isomorphism.

If $k_{r-1}>k_r=1$, then the cross-intersecting families of type $(\alpha)$ are the same as the cross-intersecting families of type $(\beta)$. Applying Lemma \ref{lem:Comparative_size_4} (note that $n\geq k_1+k_2$) to compare the values of $\sum_{i=1}^{r}|\mathcal F_i|$ with types $(\alpha)$ and $(\gamma)$, we have $\sum_{i=1}^{r}|\mathcal P_{1}^{(k_i)}|>\sum_{i=1}^{r-1}|\mathcal P_{k_{r-1}}^{(k_i)}|+|\mathcal R_{k_{r-1}}^{(k_r)}|$.
Thus $\sum_{i=1}^{r}|\mathcal F_i|\leq\sum_{i=1}^{r}\binom{n-1}{k_i-1}$ with equality if and only if  $\mathcal F_i=\mathcal P_1^{(k_i)}$ for every $i\in[r]$ up to isomorphism.

If $k_1=k_r\geq2$, then $k_1=k_2=\dots=k_r$ and so the cross-intersecting families of type $(\beta)$ are isomorphic to the cross-intersecting families of type $(\gamma)$. Thus $\sum_{i=1}^{r}|\mathcal F_i|\leq\max\{\binom{n}{k_1}-\binom{n-k_r}{k_1}+\sum_{i=2}^{r}\binom{n-k_r}{k_i-k_r},\sum_{i=1}^{r}\binom{n-1}{k_i-1}\}$  with equality if and only if up to isomorphism, either $\mathcal F_i=\mathcal P_1^{(k_i)}$ for every $i\in[r]$, or $\mathcal F_{j}=\mathcal R_{k_r}^{(k_{j})}$ for some $j\in[r]$ and $\mathcal F_{i}=\mathcal P_{k_r}^{(k_i)}$ for every $i\in[r]\setminus\{j\}$.

Assume that $k_1>k_r\geq2$. Apply Lemma \ref{lem:Comparative_size_4} to compare the values of $\sum_{i=1}^{r}|\mathcal F_i|$ with types $(\alpha)$ and $(\gamma)$ to obtain $\sum_{i=1}^{r}|\mathcal P_{1}^{(k_i)}|>\sum_{i=1}^{r-1}|\mathcal P_{k_{r-1}}^{(k_i)}|+|\mathcal R_{k_{r-1}}^{(k_r)}|$. When $k_1>k_{i^*}$, compare the values of $\sum_{i=1}^{r}|\mathcal F_i|$ with types $(\alpha)$ and $(\beta)$ to obtain $\sum_{i=1}^{r}|\mathcal P_{1}^{(k_i)}|>\sum_{i\in[r]\setminus\{i^*\}}|\mathcal P_{k_{r}}^{(k_i)}|+|\mathcal R_{k_{r}}^{(k_{i^*})}|$. Therefore, $\sum_{i=1}^{r}|\mathcal F_i|\leq\max\{\binom{n}{k_1}-\binom{n-k_r}{k_1}+\sum_{i=2}^{r}\binom{n-k_r}{k_i-k_r},\sum_{i=1}^{r}\binom{n-1}
{k_i-1}\}$ with equality if and only if up to isomorphism, either $\mathcal F_i=\mathcal P_1^{(k_i)}$ for every $i\in[r]$, or $\mathcal F_{1}=\mathcal R_{k_r}^{(k_{1})}$ and $\mathcal F_{i}=\mathcal P_{k_r}^{(k_i)}$ for every $i\in[r]\setminus\{1\}$.
\end{proof}

\section{Concluding remarks}

For non-empty cross-intersecting families $\mathcal F_i\subseteq\binom{[n]}{k_i}$, $i\in[r]$, this paper examines the largest value of $\sum_{i=1}^{r}|\mathcal F_i|$ and determines the structure of $\mathcal F_i$'s that achieves the largest sum. Theorems \ref{thm:nonempty_k1_k2_kr_other} and \ref{thm:nonempty_k1_k2_kr_largest} generalize Theorems \ref{thm:nonempty_2_nonuniform} and \ref{thm:nonempty_r_uniform} by extending two families to arbitrary number of families allowing different sizes.

We remark that Theorem \ref{thm:nonempty_k1_k2_kr_other} is not equivalent to Theorem \ref{thm:nonempty_k1_k2_kr_largest}. For example, take $r=3$, $n=10$, $k_1=5$, $k_2=3$ and $k_3=2$. Suppose that $\mathcal F_1\subseteq\binom{[n]}{k_1},\mathcal F_2\subseteq\binom{[n]}{k_2}$ and $\mathcal F_3\subseteq\binom{[n]}{k_3}$ are non-empty cross-intersecting families such that $|\mathcal F_3|\geq\binom{n-1}{k_3-1}=9$. Apply Theorem \ref{thm:nonempty_k1_k2_kr_other} with $i^*=3$ to obtain $\sum_{i=1}^{3}|\mathcal F_i|\leq\max\{46,171\}=171$ with equality if and only if there is $x\in[n]$ such that $\mathcal F_i=\{F\in\binom{[n]}{k_i}:x\in F\}$ for $1\leq i\leq3$. Apply Theorem \ref{thm:nonempty_k1_k2_kr_largest} to obtain $\sum_{i=1}^{3}|\mathcal F_i|\leq\max\{205,171\}=205$ with equality if and only if there is $S\in\binom{[n]}{k_3}$ such that $\mathcal F_1=\{F\in\binom{[n]}{k_1}:F\cap S\neq\emptyset\},\mathcal F_2=\{F\in\binom{[n]}{k_2}:S\subseteq F\}$ and $\mathcal F_3=\{S\}$. Since it is required that $|\mathcal F_3|\geq 9$, by Theorem \ref{thm:nonempty_k1_k2_kr_largest}, we only know $\sum_{i=1}^{3}|\mathcal F_i|<205$.

\end{document}